  \numberwithin{equation}{section}
  \newcommand{\N}{\mathbb{N}}         
  \newcommand{\R}{\mathbb{R}}         
  \newcommand{\Z}{\mathbb{Z}}
  \newcommand{\EE}{\mathbb{E}}        
  \newcommand{\PP}{\mathbb{P}}        
  \newcommand{\leb}{\mathcal{L}}      
  \newcommand{\supp}{\text{supp}}        
  \newcommand{\dimloc}{\text{dim}}
  \newcommand{\ldimloc}{\underline{\dim}}
  \newcommand{\udimloc}{\overline{\dim}}
  \newcommand{\e}{\varepsilon}
  \newcommand{\wh}{\widehat}
  \newcommand{\Q}{\mathcal{Q}}          
  \newtheorem{thm}{Theorem}[section]
  \newtheorem{lemma}[thm]{Lemma}
  \newtheorem{prop}[thm]{Proposition}
  \theoremstyle{remark}
\begin{document}

\title{A class of random Cantor measures, with applications}

\author{Pablo Shmerkin}
\address{Department of Mathematics and Statistics, Torcuato Di Tella University, and CONICET, Buenos Aires, Argentina}
\email{pshmerkin@utdt.edu}
\urladdr{http://www.utdt.edu/profesores/pshmerkin}
\thanks{P.S. was partially supported by Projects PICT 2013-1393 and PICT 2014-1480 (ANPCyT). }

\author{Ville Suomala}
\address{Department of Mathematical Sciences, University of Oulu, Finland}
\email{ville.suomala@oulu.fi}
\urladdr{http://cc.oulu.fi/~vsuomala/}
\thanks{
V.S. acknowledges support from the Centre of Excellence in
Analysis and Dynamics Research funded by the Academy of Finland}
\subjclass[2010]{Primary: 28A80, 60D05; Secondary: 11B25, 28A75, 28A78, 42A38, 42A61, 60G46, 60G57}

\begin{abstract}
We survey some of our recent results on the geometry of spatially independent martingales, in a more concrete setting that allows for shorter, direct proofs, yet is general enough for several applications and contains the well-known fractal percolation measure. We study self-convolutions and Fourier decay of measures in our class, and present applications of these results to the restriction problem for fractal measures, and the connection between arithmetic structure and Fourier decay.
\end{abstract}

\maketitle

\section{Introduction}

The study of the geometric measure theoretic properties of random fractals has attracted a considerable deal of attention in the last years. At the same time, the geometry of random fractals has been investigated in many works not so much as an end in itself, but with a view on applications to problems in analysis and related fields. In \cite{ShmerkinSuomala15}, we observed that many (though certainly not all) of these works are based on a small number of key features of the underlying model, and developed a general geometric theory for an axiomatic class of random measures which include many models of interest in the previous literature, such as fractal percolation, random cascades, and Poissonian cutouts, among others. This class, which we termed \emph{spatially independent martingales}, is still too restrictive to encompass some random fractals which often arise in applications, for example cartesian powers of order $\ge 3$ of a given random measure. In a forthcoming work \cite{ShmerkinSuomala16}, we develop the theory of an even wider class of random measures which does include cartesian products of any order, and apply it to study the existence of patterns (such as angles and progressions) inside random fractals and especially fractal percolation.

In this article, we step back from the maximum generality and consider a more concrete class of random sets and measures, which is nevertheless flexible enough to obtain many of the applications that motivated our works \cite{ShmerkinSuomala15, ShmerkinSuomala16}. Bypassing the general theory provides shorter, more direct proofs of some of our results, especially those concerning self-convolutions. It also allow us to present some of the new ideas in \cite{ShmerkinSuomala16} in a concrete setting.

In Section \ref{sec:themodel} we introduce the class of random measures we will work with in the rest of the article, and establish some of their basic properties. We study self-convolutions of the random measures in Section \ref{sec:selfconvolutions}, their Fourier decay and restriction estimates in Section \ref{sec:fourierdecay}, and their arithmetic structure in Section \ref{sec:arithmeticstructure}.

\section{The model}
\label{sec:themodel}
Throughout the paper, $c,C$ denote positive and finite constants whose values may be different from line to line. Occasionally $c,C$ will be random variables which take values in $(0,+\infty)$ almost surely. Whenever necessary, we may denote the dependency of $C$ from various parameters by subscripts, for instance $C_{\varepsilon}$ denotes a positive and finite constant whose value depends on $\varepsilon>0$ but not on other parameters.

Fix $2\le M\in\N$ and for each $n\in\N$, denote by $\mathcal{Q}^d_n$ the family of all $M$-adic half open sub-cubes of the unit cube $[0,1)^d$, that is
\[
\mathcal{Q}_{n}^d=\left\{\prod_{i=1}^d[j_i M^{-n},(j_i+1) M^{-n})\,:\,0\le j_i\le M^n-1\right\}\,.
\]
For convenience, we also denote $\mathcal{Q}^1_n$ by $\mathcal{Q}_n$.

We will consider a sequence of random functions $\mu_n:[0,1)^d\to [0,+\infty)$, which in the sequel we will identify with the measures $\mu_n dx$, satisfying the following conditions for some deterministic nondecreasing sequence  $(\beta_n)_{n\in\N}$:
\begin{enumerate}
\renewcommand{\labelenumi}{(M\arabic{enumi})}
\renewcommand{\theenumi}{M\arabic{enumi}}
\item\label{mu0} $\mu_0=\mathbf{1}_{[0,1)^d}$.
\item\label{Madic} $\mu_n=\beta_n\mathbf{1}_{A_n}$, where $A_n$ is a union of cubes in $\mathcal{Q}^d_{n}$.
\item\label{martingale_condition} $\EE(\mu_{n+1}(x)\,|\,A_n)=\mu_n(x)$ for all $x\in[0,1)^d$.
\item\label{SI} Conditional on $A_n$, the random variables $\mu_{n+1}(Q)$, $Q\in\mathcal{Q}_{n+1}$ are jointly independent.
\end{enumerate}

This class of random measures is, essentially, a subclass of the subdivision martingales, which in turn fit into the more general framework of spatially independent martingale measures developed in \cite{ShmerkinSuomala15}. However, there is one direction in which our class is more general: in \cite{ShmerkinSuomala15} we required $\mu_{n+1}/\mu_n$ to be uniformly bounded. This condition can be substantially weakened in all applications, so we chose to remove it from the hypotheses. As indicated in the introduction, in order to avoid unnecessary technicalities, and to draw attention to the main ideas, we will stick the this special class in this survey.

We describe a general construction that yields sequences $(\mu_n)$ satisfying the above hypotheses. Given $Q\in\mathcal{Q}_n^d$, we denote by $\mathcal{S}(Q)$ the family of all $M^d$ cubes in $\Q_{n+1}^d$ which are contained in $Q$ (thinking of the hierarchy of $M$-adic cubes as a tree, they are the offspring of $Q$). Suppose that for each $Q\in\mathcal{Q}_n^d, n\ge 0$, there is a random subset $S_Q\subset\mathcal{S}(Q)$, such that $\{ S_Q: Q\in\mathcal{Q}_n^d, n\ge 0\}$ are independent and, for each $Q'\in\mathcal{S}(Q)$,
\[
\mathbb{P}(Q'\in S_Q) = \frac{\beta_{n}}{\beta_{n+1}}\,.
\]
Then, if we inductively set $A_0=[0,1)^d$, and
\[
A_{n+1} = \bigcup_{Q\in\mathcal{Q}_n^d, Q\subset A_n} \bigcup_{Q'\in S_Q} Q'\,,
\]
the random sequence $\mu_n =\beta_n \mathbf{1}_{A_n}$ is easily seen to satisfy \eqref{mu0}--\eqref{SI}.

The following are two important classes of examples of this construction:
\begin{enumerate}
\item[(i)] \label{ex:percolation} Fix $p\in (0,1)$ and set $\beta_n=p^{-n}$. Let $S_Q$ be the family obtained by selecting each $Q'\in\mathcal{S}(Q)$ independently with probability $p$, with all the choices independent for different $Q$. This is the well-known \emph{fractal percolation} process, and is a geometric realization of a standard Galton-Watson branching process.
\item[(ii)] \label{ex:AR} Pick $0<s<d$, and choose the sequence $\beta_n$ so that $\beta_{n+1}/\beta_n\in \{1,M^d\}$, and $C^{-1} M^{n(d-s)}\le \beta_n \le C M^{n(d-s)}$ for all $n$. We define $S_Q$ for $Q\in\mathcal{Q}_n^d$ as follows. If $n$ is such that $\beta_{n+1}=M^d \beta_n$, set $S_Q=\mathcal{S}(Q)$ (that is, all of the offspring are chosen, deterministically). Alternatively, if $\beta_{n+1}=\beta_n$, then choose $Q'$ uniformly in $\mathcal{S}(Q)$ and set $S_Q=\{Q'\}$, again with all the choices for different $Q$ independent. Note that, in this case, the number of cubes making up $A_n$ is deterministic, and moreover if $Q\in\mathcal{Q}_n^d$, $Q\subset A_n$, then $\mu_m(Q)=\beta_n^{-1}$ for all $m\ge n$. This shows that $\mu_n$ converges weakly (deterministically) to a measure $\mu$ which satisfies
    \[
    C^{-1} M^{-ns} \le \mu(Q) \le C M^{-ns}
    \]
    for all $Q\in\mathcal{Q}_n^d$ such that $Q\subset A_n$.
\end{enumerate}

\begin{figure}

   \centering
  \includegraphics[width=0.9\textwidth]{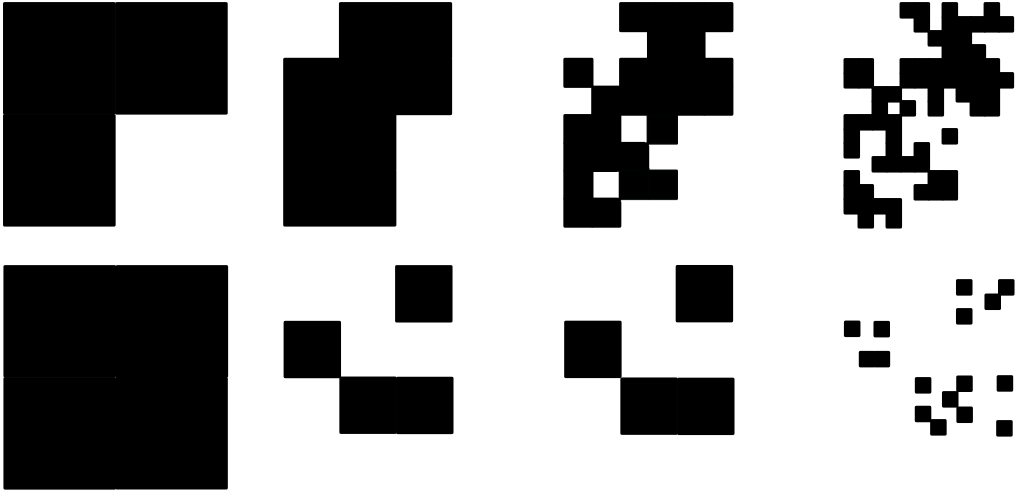}

    \caption{Illustration for the examples (i) and (ii) in the case $d=M=2$. The one on the top shows the first four steps of fractal percolation with $p=0.7$. The second one is a realization of (ii) with $\beta_1=1$, $\beta_2=\beta_3=4$ and $\beta_5=16$.}

\end{figure}

In the second example, the measures $\mu_n$ converge weakly to a non-trivial limit. In general, as in the first example, $A_n$ may be empty for some (and hence all large enough) $n$. However, provided that $\beta_n$ does not grow too quickly, the measure $\mu_n$ a.s. converge weakly to a measure $\mu$, which is non-trivial with positive probability. More precisely, denote $A=\overline{\bigcap_{n\in\N} A_n}$, and let
\begin{equation} \label{eq:def-alpha}
\underline{\alpha}=\liminf_{n\to\infty}\frac{\log_M\beta_n}{n},\quad \overline{\alpha}=\limsup_{n\to\infty}\frac{\log_M\beta_n}{n}.
\end{equation}
The following standard lemma collects the basic properties of our random model.
\begin{lemma}\label{lem:asdim}
The following holds almost surely:
\begin{enumerate}
\item The sequence of measures $\mu_n$ converges weakly to a measure $\mu$ supported on $A\subset[0,1]^d$.
\item $\overline{\dim}_B(A) \le d-\underline{\alpha}$.
\item If $\overline{\alpha}<d$, then there is a positive probability that $\|\mu\|\neq 0$, and $\dim_H A\ge d-\overline{\alpha}$ conditioned on $\|\mu\|>0$.
\item If $\underline{\alpha}=\overline{\alpha}$, then $\dim_B A=\dim_H A=\dim\mu=d-\alpha$  conditioned on $\|\mu\|>0$.
\item If $A\neq 0$ then $\|\mu\|>0$.
\end{enumerate}
\end{lemma}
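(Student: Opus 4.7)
The plan follows the standard scheme for branching-type martingale measures. For (1), for each fixed $M$-adic cube $Q$ of sufficiently high level, the sequence $(\mu_n(Q))_n$ is a nonnegative martingale by \eqref{martingale_condition} and hence converges a.s.\ in $[0,\infty)$. Since the family of $M$-adic cubes is countable, on a single event of full measure all these limits exist simultaneously and define a premeasure on the algebra of $M$-adic unions; standard extension then yields a Borel measure $\mu$ with $\mu_n \to \mu$ weakly. Because the offspring selection forces $A_{n+1} \subset A_n$ and each $\mu_n$ is supported on $A_n$, the limit is supported on $A = \overline{\bigcap_n A_n}$. For (2), writing $N_n = \#\{Q \in \Q_n^d : Q \subset A_n\}$, we have $\|\mu_n\| = \beta_n M^{-dn} N_n$; the total mass $\|\mu_n\|$ is a mean-one nonnegative martingale, hence almost surely bounded along sample paths, yielding $N_n \le C_\omega M^{dn}/\beta_n$ and thus $\overline{\dim}_B A \le d - \underline{\alpha}$.

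For (3), I would use the $L^2$ method. By \eqref{martingale_condition}--\eqref{SI}, together with the observation that $\mu_{n+1}(Q') \in \{0, \beta_{n+1} M^{-d(n+1)}\}$ for $Q' \in \Q_{n+1}^d$, a direct conditional variance computation yields
$\mathrm{Var}(\|\mu_{n+1}\| \,|\, \mathcal{F}_n) \le \|\mu_n\|\, \beta_{n+1} M^{-d(n+1)}$, whence
\[
\EE \|\mu_n\|^2 \le 1 + \sum_{k=1}^{n} \beta_k M^{-dk}.
\]
When $\overline{\alpha} < d$, this series converges, so $\|\mu_n\| \to \|\mu\|$ in $L^2$, $\EE \|\mu\| = 1$, and in particular $\PP(\|\mu\| > 0) > 0$. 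The same second-moment machinery, applied cube-by-cube and then summed dyadically, shows $\sup_n \EE \iint |x-y|^{-s} d\mu_n(x) d\mu_n(y) < \infty$ for every $s < d - \overline{\alpha}$; Fatou transfers finite $s$-energy to $\mu$, and a standard Frostman argument gives $\dim_H \supp \mu \ge s$ on $\{\|\mu\| > 0\}$, which upon letting $s \uparrow d - \overline{\alpha}$ completes (3).

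Item (4) is immediate from (2) and (3), combined with the local mass estimate $\mu(Q) \asymp \beta_n M^{-dn}$ on cubes $Q \subset A_n$, which pins down the local dimensions of $\mu$. For (5), the expected main obstacle, a L\'evy zero-one argument seems most natural. Conditional on $\mathcal{F}_n$, the restrictions $\mu|_Q$ for $Q \subset A_n$ are independent and each has the distribution of a rescaled copy of the model with the shifted sequence $(\beta_{n+k}/\beta_n)_k$, which has the same $\overline{\alpha}$ as the original. By (3), each such subprocess has survival probability $\ge c > 0$ uniformly in $n$ and $Q$, so $\PP(\|\mu\| > 0 \,|\, \mathcal{F}_n) \ge 1 - (1-c)^{N_n} \ge c$ whenever $N_n \ge 1$. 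By L\'evy's zero-one law, $\mathbf{1}_{\|\mu\| > 0} = \lim_n \PP(\|\mu\| > 0 \,|\, \mathcal{F}_n)$ a.s., and being an indicator bounded below by $c$ on $\{A \ne \emptyset\} = \bigcap_n \{N_n \ge 1\}$, it must equal $1$ there. The main technical hurdle lies in verifying that the restricted subprocess is genuinely an independent copy within our class with the same $\overline{\alpha}$, which relies crucially on \eqref{SI} and on the nondecreasing structure of $(\beta_n)$ that is preserved under the shift.
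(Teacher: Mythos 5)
Your sketch is correct in outline for items (1)--(3), and for (3) it actually takes a genuinely different and arguably cleaner route than the paper: you bound $\EE\|\mu_n\|^2$ by a direct conditional-variance calculation on the total mass, arriving at $\EE\|\mu_n\|^2\le 1+\sum_k\beta_k M^{-dk}$, whereas the paper computes the two-point correlation $\PP(x,y\in A_n)$ via the $M$-adic metric and feeds that into a Fubini argument. The two approaches give the same conclusion; yours isolates the $L^2$-boundedness of the mass more transparently, while the paper's version is what you still need (in some form) for the $s$-energy estimate, which you only gesture at with ``the same second-moment machinery applied cube-by-cube.''

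There are, however, two genuine problems. In (4), the claimed local mass estimate $\mu(Q)\asymp\beta_n M^{-dn}$ for $Q\subset A_n$ is simply false in general (it holds for the deterministic-mass example (ii), but not for fractal percolation, where $\mu(Q)$ is a nondegenerate random variable that can even vanish). What the argument actually requires, and what is implicit in the paper, is the standard fact that for a measure $\mu$ supported on a set $A$ with $\overline{\dim}_B A\le s$ one has $\overline{\dim}_{\rm loc}(\mu,x)\le s$ for $\mu$-a.e.\ $x$ (via a Borel--Cantelli count of cubes with small mass, or equivalently $\dim_P\mu\le\dim_P(\supp\mu)\le\overline{\dim}_B(\supp\mu)$). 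Combined with $\underline{\dim}_{\rm loc}(\mu,x)\ge d-\overline{\alpha}$ from (3), this pins down the exact dimension; no pointwise comparability of $\mu(Q)$ with $\beta_n^{-1}$ is available or needed.

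In (5), the L\'evy zero-one law scheme is indeed the natural one, but the uniformity claim ``each such subprocess has survival probability $\ge c>0$ uniformly in $n$'' does not follow from (3) when $\underline{\alpha}<\overline{\alpha}$. The constant $\delta$ in \eqref{eq:prob-of-survival} depends on the full sequence $(\beta_n)$, not only on $\overline{\alpha}$, and for the shifted sequence $\tilde\beta_k=\beta_{n+k}/\beta_n$ the quantity $\sum_{k}\tilde\beta_k M^{-dk}=\beta_n^{-1}M^{dn}\sum_{k}\beta_{n+k}M^{-d(n+k)}$ can blow up as $n\to\infty$ (for instance when $\beta_n$ is occasionally much smaller than $M^{n\overline{\alpha}}$), destroying the uniform lower bound on the survival probability. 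This is precisely the technical point the paper flags: one needs an increasing subsequence $n_k$ along which the conditional survival probability is uniformly bounded below, not all $n$. Your closing remark that ``the shift preserves $\overline{\alpha}$'' is true but insufficient; the issue is the multiplicative constant, not the exponent. The rest of the zero-one argument is fine once the subsequence is chosen correctly.
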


Here, by $\dim\mu=s$ we mean that $\mu$ is exact dimensional of dimension $s$, that is, $\|\mu\|:=\mu(\R^d)>0$ and
\[
\dimloc(\mu,x):=\lim_{r\downarrow0}\frac{\log\mu(B(x,r))}{\log r}=s
\]
for $\mu$-almost every $x\in [0,1]^d$. Here and throughout the paper, $B(x,r)$ stands for the open ball of centre $x$ and radius $r$.

\begin{proof}[Proof of Lemma \ref{lem:asdim}]

For any continuous $f\colon [0,1]^d\to\R$, the sequence $X_n(f)=\int f\,d\mu_n$ is a non-negative martingale due to \eqref{martingale_condition}, and thus converges a.s. to a random variable $X(f)\in[0,\infty)$. Let $(f_m)_{m\in\N}$ be a uniformly dense subset of $C([0,1]^d,\R)$. Then a.s. $X(f_m)$ is well-defined for all $m$. This implies that $X(f)$ is well defined for all $f\in C([0,1]^d,\R)$ (take a subsequence of $f_m$ converging uniformly to $f$). Since $f\mapsto X(f)$ is easily checked to be a positive linear functional, it follows from the Riesz representation theorem that $\mu_n$ is almost surely weakly convergent, proving the first claim.

Let $N_n$ denote the number of cubes in $\mathcal{Q}_n^d$ forming $A_n$. Then $N_n=\beta_n^{-1}2^{Mnd}\|\mu_n\|$, so whenever $\|\mu_n\|$ is uniformly bounded (which we have seen happens almost surely), this yields $\overline{\dim}_B A\le d-\underline{\alpha}$.

The third claim follows by an application of the second-moment method. Let $\varepsilon>0$ and consider the $M$-adic metric $\kappa$ on $[0,1)^d$. That is, $\kappa(x,x)=0$ and $\kappa(x,y)=M^{-n}$ if $x$ and $y$ belong to the same element of $\mathcal{Q}_k^d$ when $k=n$ but not for any $k>n$. Let $\kappa(x,y)=M^{-m}$. Using \eqref{mu0}--\eqref{SI} gives $\PP(x,y\in A_n)=\beta^{-1}_n$ if $n\le m$ and $\PP(x,y\in A_n)=\beta_m\beta_n^{-2}$ if $n\ge m$. Since $\beta_m\le  C M^{m(\overline{\alpha}+\varepsilon)}$ for all $m$ (where $C$ depends on $\e$ and the sequence $(\beta_n)$), we infer that
\begin{equation}\label{second_moment_est}
\PP(x,y\in A_n)\le C \beta_n^{-2}|x-y|^{-\overline{\alpha}-\varepsilon}\,.
\end{equation}
Here we used the pythagorean inequality $|x-y|\le\sqrt{d}\,\kappa(x,y)$, which holds for all $x,y\in[0,1)^d$.

Let us consider the second moments of $X_n=\mu_n([0,1]^d)$. Using Fubini's theorem and \eqref{second_moment_est}, we arrive at the estimate
\begin{align}\label{eq:sm}
\EE(X_n^2)=\beta_n^2\int_{x\in[0,1)^d}\int_{y\in[0,1)^d}\PP(x,y\in A_n)\,dx\,dy\le C \iint |x-y|^{-\overline{\alpha}-\varepsilon}\,dx\, dy\,.
\end{align}
This shows that $\EE(X_n^2)$ is bounded if $\overline{\alpha}<d$, and in this case $\mu_n([0,1)^d)$ is an $L^2$-bounded martingale. By the martingale convergence theorem, $\EE(\|\mu\|)=1$ and $\EE(\|\mu\|^2)$ is bounded by a finite constant depending only on $\overline{\alpha}$ and the sequence $(\beta_n)$. By Cauchy-Schwartz,
\[
\EE(\|\mu\|)^2  =  \EE(\|\mu\|\mathbf{1}_{\|\mu\|=0})^2  \le \EE(\|\mu\|^2) \PP(\|\mu|>0),
\]
which is to say
\begin{equation} \label{eq:prob-of-survival}
\PP(A\neq\varnothing)\ge \PP(\|\mu\|>0) \ge \frac{\EE(\|\mu\|)^2}{\EE(\|\mu\|^2)} > \delta\,,
\end{equation}
where again $\delta>0$ depends only on $\overline{\alpha}$ and $(\beta_n)$.

Next, assuming that $\overline{\alpha}<d$, we estimate the $s$-energy of $\mu_n$. Calculating as in \eqref{eq:sm} we get
\begin{align*}
\EE\left(\iint|x-y|^{-s}\,d\mu_n(y)\,d\mu_n(x)\right)&=\beta_n^2\iint |x-y|^{-s}\PP(x,y\in A_n)\,dx\,dy\\
&\le C\iint |x-y|^{-s-\overline{\alpha}-\varepsilon}\,dx\,dy\,.
\end{align*}
The upper bound is independent of $n$ and it is finite whenever $s+\overline{\alpha}<d$ (and $\varepsilon>0$ is small). Using Fatou's lemma, this implies that almost surely \begin{equation}\label{eq:finite_energy}
\int_{x\in[0,1)^d}\int_{y\in[0,1)^d}|x-y|^{-s}\,d\mu(y)\,d\mu(x)<\infty
\end{equation}
whenever $s<d-\overline{\alpha}$. But it is well known that \eqref{eq:finite_energy} (for all $s<d-\overline{\alpha}$) implies that $\dim_H(A)\ge d-\overline{\alpha}$ (provided $\|\mu\|>0$) and, moreover,
\[\ldimloc(\mu,x)=\liminf_{r\downarrow0}\frac{\log\mu(B(x,r))}{\log r}\ge d-\overline{\alpha}\]
for $\mu$-almost all $x\in[0,1)^d$.

In the case $\overline{\alpha}=\underline{\alpha}=\alpha$, recalling that $\dim_H\le\dim_B A\le d-\underline{\alpha}$, we conclude that a.s. $\udimloc(\mu,x)=\limsup_{r\downarrow0}\frac{\log(\mu(B(x,r))}{\log r}= d-\alpha$ for $\mu$-almost all $x$ and in particular, $\dim_H A=\dim\mu=d-\alpha$ almost surely on $A\neq\varnothing$.

It remains to show that $A$ and $\mu$ have the same survival probability. We note that for the example (ii) above it holds (deterministically) that $\|\mu\|=1$, whereas for the example (i) the fact that $A$ and $\mu$ have the same survival probability follows from a standard zero-one law for Galton-Watson branching processes (see \cite[Corollary 5.7]{LyonsPeresbook}). The general case still carries a weak form of self-similarity allowing to deduce that $\|\mu\|>0$ a.s. on $A\neq\varnothing$. A key point is the existence of an increasing sequence $n_k$ and $\delta>0$ such that for all $Q\in\mathcal{Q}_{n_k}^d$, $\PP(\mu(Q)>0\,|\,Q\in A_{n_k})>\delta$. We omit the technical details.
\end{proof}

\section{self convolutions}
\label{sec:selfconvolutions}

Recall that if $\mu,\nu$ are two finite Borel measures on $\R^d$, then their convolution is the push-down of $\mu\times \nu$ under the addition map $(x,y)\mapsto x+y$, that is,
\[
\int f\,d(\mu*\nu) = \int f(x+y)d\mu(x)d\nu(y)
\]
for all bounded functions $f:\R^d\to\R$.

In this section we investigate self-convolutions of the random measures $\mu$. It has been known for a long time that the self-convolution $\mu*\mu$ of a singular measure on $\R$ can be absolutely continuous, and the density can even be H\"{o}lder continuous. K\"{o}rner \cite{Korner08} obtained an optimal estimate in terms of the dimension of (the support of) $\mu$ and the H\"{o}lder exponent of the self-convolution on $\R$. Recently, Chen and Seeger \cite{ChenSeeger15}, by adapting K\"{o}rner's construction, extended this to $\R^d$ and self-convolutions of any order. The constructions of K\"{o}rner, and of Chen and Seeger are ad-hoc and in some sense geometrically irregular, for example they are never Ahlfors-regular. In this section we show that a similar result holds for the measures in our class, including fractal percolation. The result may be equivalently stated in terms of orthogonal projections: denoting by $\Pi\colon(\R^d)^m\to\R^d, (x_1,\ldots,x_m)\to\sum_{i=1}^m x_i$ the orthogonal projection onto the $d$-dimensional diagonal of $(\R^d)^m$, the result says that the push-forward $\mu^n\circ\Pi^{-1}$ is absolutely continuous with a H\"older density, whenever $\overline{\alpha}<(m-1)d/m$ (where $\overline{\alpha}$ is defined in \eqref{eq:def-alpha}). For fractal percolation sets $A$ in $\R^2$, the study of orthogonal projections was initiated by Rams and Simon \cite{RamsSimon14, RamsSimon15} and for the planar fractal percolation measure with almost sure dimension $>1$ the H\"older continuity of the projections was first addressed by Peres and Rams \cite{PeresRams14}. Rams and Simon \cite{RamsSimon14} studied the dimension of the sums of $m$ independent copies of the fractal percolation set for any $m\ge 2$; their methods do not work for convolutions of the natural measure. See also \cite{SimonVago14, ShmerkinSuomala15b} for other related recent work. As will be explained below, the study of self-convolutions, especially of order $\ge 3$, requires new ideas.

\begin{thm} \label{thm:self-convolutions}
Suppose $(\mu_n)$ satisfy \eqref{mu0}--\eqref{SI}, and let $\overline{\alpha}$ be as in \eqref{eq:def-alpha}. If $\overline{\alpha}<d/2$ and
\begin{equation}\label{eq:beta_n_bound}
\lim_{n\to\infty} \log\beta_{n+1}/\log\beta_n=1,
\end{equation}
then, conditioned on $\|\mu\|>0$, the convolution $\mu*\mu$ is almost surely absolutely continuous, and the density is H\"older continuous with exponent $\gamma$ for any $\gamma<d/2-\overline{\alpha}$.

If $\overline{\alpha}<2d/3$ the same holds for $\mu*\mu*\mu$ and H\"older exponents
\begin{equation}
\gamma<
\begin{cases}
d-\tfrac32\overline{\alpha}\text{ when }d/2\le\overline{\alpha}<2d/3\\
\frac{1}{2}(d-\overline{\alpha})\text{ when }0<\overline{\alpha}\le d/2
\end{cases}
\end{equation}
Likewise, if $\overline{\alpha}<(m-1)d/m$ for some $3<m\in\N$, then the $m$-fold self convolution $\mu^{*m}$ is a.s. absolutely continuous and the density is H\"older with a quantitative exponent $\gamma=\gamma(d,m,\overline{\alpha})$.
\end{thm}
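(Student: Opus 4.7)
The plan is to show that the sequence of densities $f_n:=\mu_n^{*m}$ converges almost surely in the H\"older space $C^\gamma$ to a limit, which will then be the density of $\mu^{*m}$. By a Kolmogorov--Chentsov-type criterion on the $M$-adic grid, this reduces to the moment bound
\begin{equation*}
\EE\bigl[|f_{n+1}(Q)-f_n(Q)|^{2}\bigr]\le C M^{-n(2d+2\gamma'+\varepsilon)}
\end{equation*}
for every $Q\in\mathcal{Q}_n^d$, some $\gamma'>\gamma$, and any $\varepsilon>0$. Borel--Cantelli applied to the $M^{nd}$ cubes at each level, followed by a chaining argument through the $M$-adic tree, then converts these cube-by-cube estimates into a uniform H\"older modulus of continuity for the limit. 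Hypothesis \eqref{eq:beta_n_bound} enters here to absorb the ratios $\beta_{n+1}/\beta_n$, which may grow subexponentially in $n$.

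A key preliminary observation is that $(f_n(z))_n$ is a pointwise martingale for every $z\in[0,1)^d$: for Lebesgue-almost every $(w_1,\ldots,w_{m-1})$, the $m$ points $w_1,\ldots,w_{m-1},z-w_1-\cdots-w_{m-1}$ lie in pairwise distinct cubes of $\mathcal{Q}_{n+1}^d$, so (M3) and (M4) force $\EE[\prod_i\mu_{n+1}(\cdot)\mid A_n]=\prod_i\mu_n(\cdot)$ at these points. For $m=2$, I would expand $\EE[(f_{n+1}(z)-f_n(z))^2 \mid A_n]$ as a 4-point integral; only configurations in which at least two of the four coordinates share a common $(n+1)$-adic cube survive, and those can be controlled using the two-point bound $\PP(x,y\in A_n)\le C\beta_n^{-2}|x-y|^{-\overline\alpha-\varepsilon}$ from the proof of Lemma~\ref{lem:asdim} applied in two essentially orthogonal directions, yielding the exponent $\gamma<d/2-\overline\alpha$.

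For $m\ge 3$ the main obstacle is that the direct second-moment expansion involves $2m$-point correlations of $\mu$, which cannot be bounded by two-point estimates when three or more points cluster. To circumvent this I would use the telescoping identity
\begin{equation*}
\mu_{n+1}^{*m}-\mu_n^{*m}=\sum_{k=0}^{m-1}\mu_{n+1}^{*k}*(\mu_{n+1}-\mu_n)*\mu_n^{*(m-1-k)},
\end{equation*}
isolating a single mean-zero factor $(\mu_{n+1}-\mu_n)$ in each summand. Conditional on $A_n$, property (M4) makes the contributions of distinct $(n+1)$-adic cubes inside $(\mu_{n+1}-\mu_n)$ independent and mean zero, so cross-terms in the conditional second moment vanish off a diagonal in which the two copies of $(\mu_{n+1}-\mu_n)$ lie in the same $(n+1)$-cube. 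Assuming inductively that $\mu^{*j}$ has a $C^{\gamma_j}$ density for each $j<m$, this diagonal sum can be estimated by products of H\"older norms of the lower-order convolutions against the two-point estimate, producing a recursion on $\gamma_m$ whose output matches the exponents in the statement; the threshold $\overline\alpha<(m-1)d/m$ is precisely the condition that this recursion still yields a positive exponent, and the piecewise form of the $m=3$ exponent reflects the competition between two regimes in the diagonal sum. The technical difficulty, and the main obstacle, is that the factors $\mu_{n+1}^{*k}$, $(\mu_{n+1}-\mu_n)$, and $\mu_n^{*(m-1-k)}$ all stem from the same realization, so the cancellation coming from (M4) and the inductive maintenance of the H\"older bounds must be carried out simultaneously at the level of the approximations $\mu_n$ before passing to the limit.
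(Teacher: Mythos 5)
There is a genuine gap in the probabilistic engine of your proposal. You propose to obtain a uniform H\"older modulus by bounding the \emph{second} moment
$\EE\bigl[|f_{n+1}(Q)-f_n(Q)|^{2}\bigr]\le C M^{-n(2d+2\gamma'+\varepsilon)}$
and then applying Chebyshev, Borel--Cantelli, and $M$-adic chaining. This cannot work: even when your second-moment estimate is tight (and for $m=2$, $d=1$ one checks that the variance of $\int_Q(f_{n+1}-f_n)$ is of order $M^{-n(3-2\overline{\alpha})}$, which is consistent with the stated bound for $\gamma'<1/2-\overline{\alpha}$), Chebyshev only gives a polynomially small tail $\PP(|\int_Q(f_{n+1}-f_n)|>M^{-n(d+\gamma')})\le CM^{-n(1-2\overline{\alpha}-2\gamma')}$, and the union bound over the $M^{nd}$ cubes at level $n$ then produces $M^{n(d-1+2\overline{\alpha}+2\gamma')}$, which is not summable for any admissible $\gamma'$. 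The paper avoids this by replacing Chebyshev with a Hoeffding--Janson concentration inequality for sums of partly dependent bounded random variables (Lemma~\ref{lem:HoeffdingJanson}), which gives tails of order $\exp(-cM^{n\tau})$; these superpolynomial tails are exactly what is needed to absorb the polynomially many grid points $u\in\Gamma_n$ in the union bound. To salvage your approach you would need to replace second moments by $p$-th moments with $p$ very large (i.e.\ $p>d/(\widetilde{\gamma}-\gamma')$), which in turn requires controlling $2p$-point correlations of $A_{n+1}$; this is essentially what the concentration inequality packages efficiently.

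For $m\ge3$ there is a second gap. Your telescoping identity correctly isolates a single mean-zero factor $(\mu_{n+1}-\mu_n)$, but this does not by itself control the \emph{degree of dependency} among the summands, which is the central difficulty. When $\Pi^{-1}(u)\subset\R^{md}$ is a hyperplane of codimension $d\ge1$ in $\R^{md}$, many cubes $Q\in\mathcal{Q}_n^{md}$ meeting $\Pi^{-1}(u)$ can share a coordinate projection, so the random variables $X_Q$ are not pairwise independent even after conditioning on $A_n$; there can also be a large number of cubes along the semidiagonals $\{x_i=x_j\}$ for which $\EE(X_Q\mid A_n)\ne0$. Your plan to ``inductively maintain H\"older bounds for lower-order convolutions'' points in the right direction, but the way this actually enters the paper's argument is specific and quantitative: the $(m-1)$-fold slice densities $\widetilde{Y}_n^V$ along affine lines $V$ (Lemma~\ref{lem:technical-estimate}) are used via the coarea formula (Lemma~\ref{lem:dependency-degree-bound}) to bound both the degree of the dependency graph and the number of semidiagonal cubes by $C\widetilde{X}_n\beta_n^{-2}M^n$. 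This degree bound then feeds back into Lemma~\ref{lem:HoeffdingJanson}, and the whole argument is a joint probabilistic induction in $n$ on the events $\{\widetilde{X}_n\le LM^{n(2\overline{\alpha}-1+2\varepsilon)}\}$ and $\{|Y_{n+1}^u-Y_n^u|\le LM^{-n\widetilde{\gamma}}(1+\sqrt{Y_n^u})\}$. Without this explicit control of the dependency degree, the conditional second (or higher) moment expansion of $\mu_{n+1}^{*m}-\mu_n^{*m}$ is not manageable. The shape of your inductive plan is in the right spirit, but the telescoping decomposition by itself does not supply the quantitative handle that makes the induction close.

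On the positive side, the martingale observation for $f_n(z)$ and the identification of the two-point bound $\PP(x,y\in A_n)\le C\beta_n^{-2}|x-y|^{-\overline{\alpha}-\varepsilon}$ as the basic geometric input are both correct and consistent with the paper's argument; the latter underlies both the second-moment estimate in Lemma~\ref{lem:asdim} and the cardinality bound $\#\widetilde{\mathcal{Q}}^\ell\le M^{\ell+1}\beta_n^{-2}Y_n^u$ used in the concentration step.
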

We make some remarks on the statement. The hypotheses hold, in particular, when $\log\tfrac1n \beta_n\to \alpha\in (0,d/2)$, and more concretely for our classes of examples (i) and (ii). At least for double convolutions, the range of H\"{o}lder exponents is optimal (up to the critical exponent), in the sense that if $\mu$ is any measure supported on a set of Hausdorff dimension $d-\alpha$, then $\mu*\mu$ cannot have a H\"{o}lder density of exponent larger than $d/2-\alpha$, see \cite{Korner08, ChenSeeger15}.

For clarity of exposition, we only present the proof in the case of dimension $d=1$ and double and triple convolutions; these cases already contain the main ideas of the general case, while being technically much simpler. The proof of the general case can be found in \cite{ShmerkinSuomala15,ShmerkinSuomala16}. The proof has a deterministic and a random component; in order to clarify the ideas, we deal with them separately starting with the deterministic result. We further split this into the cases of $\mu*\mu$ and $\mu*\mu*\mu$.

\begin{prop} \label{prop:Holder-cont-deterministic-step}
Let $(\mu_n)$ be a sequence of measures in $[0,1)^d$ satisfying \eqref{mu0} and \eqref{Madic} and suppose that $\mu_n$ weakly converges to a non-trivial measure $\mu$. For each $u\in[0,2]^d$, define
\[Y_{n}^u=\int_{\Pi^{-1}(u)}\mu_n\times\mu_n\,d\mathcal{H}^d\,,\]
where $\Pi\colon\R^{2d}\to\R^d$, $(x,y)\mapsto x+y$

Let $0<\widetilde{\gamma}\le 1$ and let $\Gamma_n\subset[0,2]^d$ be $\delta_n$-dense for each $n\in\N$, where $\delta_n=M^{-n(\widetilde{\gamma}+d)}\beta_{n+1}^{-2}$. Suppose that for some $C<\infty$,
\begin{equation}\label{eq:speed_assumption}
|Y_{n+1}^u-Y_{n}^u|\le C M^{n\widetilde{\gamma}}(1+\sqrt{Y_n^u})
\end{equation}
for each $n\in\N$ and each $u\in\Gamma_n$. Then $\mu*\mu$ is absolutely continuous and its density is H\"older of any exponent $\gamma<\widetilde{\gamma}$.
\end{prop}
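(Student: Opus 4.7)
The plan is to show that the densities $f_n(u) := c_d^{-1} Y_n^u$ of $\mu_n * \mu_n$, where $c_d = 2^{d/2}$ is the Jacobian of the parametrization of $\Pi^{-1}(u)$, converge (possibly after suitable spatial averaging) to a continuous function $f$ that is H\"older of every exponent $\gamma<\widetilde{\gamma}$. Since $\mu_n\to\mu$ weakly we have $\mu_n*\mu_n\to\mu*\mu$ weakly, and uniqueness of densities will then identify $f$ as the density of $\mu*\mu$; absolute continuity of $\mu*\mu$ follows at once.

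The first ingredient is an elementary Lipschitz estimate for $u\mapsto Y_n^u$. Writing $Y_n^u = c_d\beta_n^2\mathcal{L}^d(A_n\cap(u-A_n))$, bounding the symmetric difference $\mathcal{L}^d(A_n\triangle(A_n+t))$ by the perimeter of $A_n$ times $|t|$, and using the crude cube count $N_n \le C M^{nd}/\beta_n$ that comes from $\sup_n\|\mu_n\|<\infty$ (itself a consequence of the weak convergence to a finite measure), yields $|Y_n^u - Y_n^v| \le L_n |u-v|$ with $L_n \le C \beta_n M^n$ at scales $|u-v|\le M^{-n}$. The exponents in the formula $\delta_n = M^{-n(\widetilde{\gamma}+d)}\beta_{n+1}^{-2}$ are calibrated precisely so that $(L_n + L_{n+1})\delta_n \le C M^{n\widetilde{\gamma}}$; combined with the $\delta_n$-density of $\Gamma_n$, this promotes \eqref{eq:speed_assumption} from $\Gamma_n$ to every $u\in[0,2]^d$, at the cost of a larger absolute constant.

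With the speed bound now available everywhere, the argument proceeds by a scale-by-scale chaining. Introducing $Z_n^u = \sqrt{1+Y_n^u}$ and exploiting the identity $Y_{n+1}-Y_n = (Z_{n+1}-Z_n)(Z_{n+1}+Z_n)$ converts the extended speed bound into an additive recursion for $Z_n^u$, from which one extracts the estimates needed for the telescoping comparisons in the H\"older step (averaging $f_n$ over balls of radius $\delta_n$ if required to suppress pointwise fluctuations). For $u,v\in[0,2]^d$ with $|u-v|\sim\delta_n$ one writes
\[
|f(u)-f(v)| \le |f(u)-f_n(u)| + |f_n(u)-f_n(v)| + |f_n(v)-f(v)|,
\]
bounding the outer terms by the iterated extended speed bound and the middle term by the Lipschitz estimate at scale $\delta_n$. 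Both contributions scale like $\delta_n^{\widetilde{\gamma}}$ up to sub-exponential factors in $\beta_n$; re-expressing in terms of $|u-v|$ absorbs those factors into an arbitrarily small loss in the exponent, yielding H\"older regularity for every $\gamma<\widetilde{\gamma}$.

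The main obstacle is precisely this final balance: the Lipschitz constant $L_n = C\beta_n M^n$ grows in $n$ while $\delta_n$ shrinks, and the formula for $\delta_n$ is dictated exactly by the requirement that the Lipschitz error and the iterated speed error enter on equal footing at each dyadic level. Once the uniform H\"older bound is in place, Arzel\`a–Ascoli combined with the weak convergence $\mu_n*\mu_n\to\mu*\mu$ identifies the limit $f$ as the density of $\mu*\mu$, which is therefore absolutely continuous with a H\"older-continuous density of every exponent $\gamma<\widetilde{\gamma}$.
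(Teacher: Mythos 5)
Your preliminary steps are essentially the paper's: identify $Y_n^u$ as (a constant times) the density of $\mu_n*\mu_n$, establish an a priori Lipschitz estimate $|Y_n^u - Y_n^{u'}|\le L_n|u-u'|$ with $L_n$ of order $\beta_n^2 M^n$ (your perimeter argument in fact sharpens this to $\beta_n M^n$, which is fine), use the calibration of $\delta_n$ to promote the speed assumption from $\Gamma_n$ to all $u\in[0,2]^d$, and deduce that $Y_n^u$ converges to a limit $Y^u$ with $|Y_n^u-Y^u|\le CM^{-n\widetilde\gamma}$. All of that is sound.

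The gap is in the final H\"older step, and it is not a small one. The single-scale triangle inequality
\[
|Y^u-Y^{u'}| \le 2CM^{-n\widetilde\gamma} + L_n|u-u'|
\]
with the a priori Lipschitz constant $L_n\sim\beta_n M^n$ does \emph{not} give H\"older exponent $\widetilde\gamma$, no matter how $n$ is chosen. Balancing the two terms forces $|u-u'|\sim M^{-n(1+\widetilde\gamma)}\beta_n^{-1}$, which yields $|Y^u-Y^{u'}|\lesssim |u-u'|^{\theta}$ only for $\theta = \widetilde\gamma/(1+\widetilde\gamma+\overline\alpha)$, strictly less than $\widetilde\gamma$. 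Your claim that the $\beta_n$ factors are ``sub-exponential'' and can be absorbed into ``an arbitrarily small loss in the exponent'' is wrong: $\beta_n$ is genuinely exponential in $n$ (for fractal percolation $\beta_n=p^{-n}$), so $M^{-n\widetilde\gamma}/\delta_n^{\widetilde\gamma} = M^{n\widetilde\gamma^2}\beta_{n+1}^{2\widetilde\gamma}$ grows exponentially, and the loss in the H\"older exponent is a fixed multiplicative factor, not an $\varepsilon$. The $Z_n=\sqrt{1+Y_n}$ substitution you mention does not rescue this — it converts the speed bound into an additive recursion but does not change the scale-matching arithmetic.

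What is missing is the bootstrap that makes the paper's proof work. The paper observes that because $A_n\times A_n$ is a union of $M$-adic squares, $u\mapsto Y_n^u$ is \emph{affine} on each $M$-adic interval $Q\in\mathcal{Q}_n$. Consequently, once you know $Y^u$ is $\gamma_m$-H\"older (starting from $\gamma_0=0$, i.e. boundedness), the endpoint oscillation of $Y_n^u$ over $Q$ is $\lesssim M^{-n\widetilde\gamma}+M^{-n\gamma_m}$, which by linearity upgrades the Lipschitz bound on $Q$ from the crude $L_n\sim\beta_n^2 M^n$ to $CM^{n(1-\gamma_m)}$. Plugging this improved Lipschitz bound back into the triangle inequality and rebalancing scales then yields H\"older exponent $\gamma_{m+1}=\widetilde\gamma/(1+\widetilde\gamma-\gamma_m)$; iterating, $\gamma_m\uparrow\widetilde\gamma$. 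Without this self-improvement mechanism — which crucially uses the piecewise-linear structure of $Y_n^u$, not just its global Lipschitz constant — there is no way to close the gap between the naive exponent and $\widetilde\gamma$. You should also note that no spatial averaging of $f_n$ is needed: $Y_n^u$ is already globally Lipschitz.
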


\begin{prop} \label{prop:Holder-cont-deterministic-step-3}
Let $(\mu_n)$ be a sequence of measures in $[0,1)^d$ satisfying \eqref{mu0} and \eqref{Madic} and suppose that $\mu_n$ weakly converges to a non-trivial measure $\mu$. For each $u\in[0,3]^d$, define
\[Y_{n}^u=\int_{\Pi^{-1}(u)}\mu_n\times\mu_n\times\mu_n\,d\mathcal{H}^{2d}\,,\]
where $\Pi\colon\R^{3d}\to\R^d$, $(x,y,z)\mapsto x+y+z$.

Let $\widetilde{\gamma}\le1$ and $\Gamma_n\subset[0,3]^d$ be $\delta_n$-dense for each $n\in\N$, where $\delta_n=M^{-n(\widetilde{\gamma}+2d)}\beta_{n+1}^{-3}$. Suppose that for some $C<\infty$,
\begin{equation*}
|Y_{n+1}^u-Y_{n}^u|\le C M^{n\widetilde{\gamma}}(1+\sqrt{Y_n^u})
\end{equation*}
for each $n\in\N$ and each $u\in\Gamma_n$. Then $\mu*\mu*\mu$ is absolutely continuous and its density is H\"older with any exponent $\gamma<\widetilde{\gamma}$.
\end{prop}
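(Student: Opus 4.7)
The plan is to identify $Y_n^u$ with (a constant multiple of) the density $g_n(u)$ of the absolutely continuous measure $\mu_n\ast\mu_n\ast\mu_n$ at $u$. Writing $f_n=\beta_n\mathbf{1}_{A_n}$ for the density of $\mu_n$ and parametrizing $\Pi^{-1}(u)$ by $(y,z)\mapsto(u-y-z,y,z)$, Fubini yields $Y_n^u = 3^{d/2}(f_n\ast f_n\ast f_n)(u) = 3^{d/2}g_n(u)$. Since $\mu_n\to\mu$ weakly on $[0,1]^d$ implies $\mu_n^{\ast 3}\to\mu^{\ast 3}$ weakly on $[0,3]^d$, it suffices to prove that $g_n\to g$ uniformly with $g$ H\"older continuous; the weak limit will then be $g\,dx$, i.e.\ $\mu\ast\mu\ast\mu$ has H\"older density $g$.

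First, I would establish a Lipschitz bound $\|g_n\|_{\mathrm{Lip}}\le L_n$. Since $f_n$ is piecewise constant on $M$-adic cubes of side $M^{-n}$ and $\|f_n\|_1=\|\mu_n\|$ is uniformly bounded, expressing the distributional gradient of $g_n = f_n\ast f_n\ast f_n$ through the boundary measure of $A_n$ (which has total mass $\le C\|\mu_n\|M^n/\beta_n$) and bounding successive convolutions yields $L_n\le C\beta_{n+1}^3 M^{2dn}$. The mesh $\delta_n=M^{-n(\widetilde\gamma+2d)}\beta_{n+1}^{-3}$ is calibrated exactly so that $L_n\delta_n\lesssim M^{-n\widetilde\gamma}$.

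Next, I would extend the pointwise hypothesis from $\Gamma_n$ to all of $[0,3]^d$. For arbitrary $u$, pick $u'\in\Gamma_n$ with $|u-u'|\le\delta_n$. The Lipschitz bounds at levels $n$ and $n+1$, together with $\sqrt{Y_n^{u'}}\le\sqrt{Y_n^u}+\sqrt{L_n\delta_n}$ to handle the feedback term, give after triangulating through $u'$
$$|Y_{n+1}^u-Y_n^u|\le C'M^{-n\widetilde\gamma}\bigl(1+\sqrt{Y_n^u}\bigr)\quad\text{for all }u\in[0,3]^d.$$
A bootstrap on this recursion---using the summability of $\sum_n M^{-n\widetilde\gamma}$ against the self-correcting $\sqrt{\cdot}$ feedback, starting from $Y_0^u = 1$---then yields a uniform bound $\sup_{n,u}Y_n^u\le C$, and telescoping gives $g_n\to g$ uniformly with $\|g-g_n\|_\infty=O(M^{-n\widetilde\gamma})$.

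The main obstacle is upgrading uniform convergence to H\"older regularity of $g$ with exponent $\gamma<\widetilde\gamma$. Writing $g=g_0+\sum_n D_n$ with $D_n=g_{n+1}-g_n$, for each $n$ I would interpolate between the two bounds $\|D_n\|_\infty\lesssim M^{-n\widetilde\gamma}$ and $\|D_n\|_{\mathrm{Lip}}\le 2L_{n+1}$, obtaining $|D_n(u)-D_n(v)|\lesssim\min\bigl(M^{-n\widetilde\gamma},\,L_{n+1}|u-v|\bigr)$. Summing with an optimal cutoff in $n$ balanced against $|u-v|$ yields a H\"older estimate on $g$. The delicate point is that the resulting exponent can be taken arbitrarily close to $\widetilde\gamma$, which rests on the calibration of $\delta_n$ already absorbing the $\beta_{n+1}^3 M^{2dn}$ growth of $L_n$; this parallels the double-convolution argument in Proposition~\ref{prop:Holder-cont-deterministic-step}, with the triple-convolution structure replacing $\beta_{n+1}^2\,M^{dn}$ by $\beta_{n+1}^3\,M^{2dn}$ throughout.
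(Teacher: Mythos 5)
Your overall strategy is sound up to the point where uniform convergence $\|Y_n-Y\|_\infty\lesssim M^{-n\widetilde\gamma}$ is established: the coarea identification of $Y_n^u$ with $3^{d/2}$ times the density of $\mu_n^{*3}$, the crude Lipschitz bound for $Y_n$, the extension of the hypothesis from $\Gamma_n$ to all of $[0,3]^d$, and the recursion $|X_{n+1}-X_n|\lesssim M^{-n\widetilde\gamma}X_n$ giving $\sup_n X_n<\infty$ — all of this is essentially what the paper does for Proposition~\ref{prop:Holder-cont-deterministic-step}, and carries over.

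The gap is in the final H\"older step. You interpolate $|D_n(u)-D_n(v)|\lesssim\min\bigl(M^{-n\widetilde\gamma},\,L_{n+1}|u-v|\bigr)$ using the \emph{global} Lipschitz constant $L_{n+1}$, which grows like $M^{cn}$ for some $c\ge 1$ (even in the most favourable case $\beta_n\equiv 1$, $d=1$, one has $L_n\sim M^n$; with genuinely growing $\beta_n$, $c$ is larger still). Optimising the cutoff then gives H\"older exponent $\widetilde\gamma/(\widetilde\gamma+c)$, which is \emph{strictly} less than $\widetilde\gamma$ and in the relevant parameter range much less. Your remark that ``the calibration of $\delta_n$ already absorbs the growth of $L_n$'' is not correct: the calibration $L_n\delta_n\lesssim M^{-n\widetilde\gamma}$ is used only to transfer the increment hypothesis from $\Gamma_n$ to all of $[0,3]^d$; it plays no role once you pass to the interpolation, where $L_n$ reappears at full size. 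Nothing in your argument brings the exponent up to $\widetilde\gamma$.

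The missing ingredient is structural: on each $M$-adic cube of side $M^{-n}$, $Y_n^u$ (and hence $D_n$) is a \emph{piecewise polynomial of bounded degree} in $u$. The paper exploits this to replace the useless global Lipschitz constant by a far better local one. Concretely, for Proposition~\ref{prop:Holder-cont-deterministic-step} the key estimate \eqref{eq:Lipschitz} says $|Y_n^u-Y_n^{u'}|\le CM^{n(1-\gamma_m)}|u-u'|$ for $|u-u'|\le M^{-n}$, derived from the linearity of $Y_n$ on $M$-adic intervals together with the endpoint bound coming from the already-established $\gamma_m$-H\"older continuity of $Y$. This is then fed into a bootstrap $\gamma_{m+1}=\widetilde\gamma/(1+\widetilde\gamma-\gamma_m)$, and $\gamma_m\uparrow\widetilde\gamma$ recovers all exponents $\gamma<\widetilde\gamma$. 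For the triple convolution the modification is that $Y_n^u$ is piecewise quadratic rather than linear, so a Markov-type inequality for low-degree polynomials must be used to pass from the oscillation bound on an $M$-adic interval to the local Lipschitz bound, but the bootstrap itself is unchanged. Without this local improvement your final exponent is stuck at $\widetilde\gamma/(\widetilde\gamma+c)$, so the proof as written does not establish the stated conclusion.

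Two smaller points. First, your claimed Lipschitz constant $L_n\le C\beta_{n+1}^3 M^{2dn}$ is a (harmless) large overestimate; the boundary-measure argument you sketch actually gives $L_n\lesssim\beta_n M^n\|\mu_n\|^2$, and the geometric count of cubes meeting $\Pi^{-1}(u)$ gives $L_n\lesssim\beta_n^3 M^n$ — either suffices for the calibration, but neither matches your stated bound. Second, note the displayed hypothesis in the statement should read $M^{-n\widetilde\gamma}$, not $M^{n\widetilde\gamma}$ (a typo carried over from the source); your proposal correctly interprets it with the negative exponent.
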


We will only give a proof for Proposition \ref{prop:Holder-cont-deterministic-step}; Proposition \ref{prop:Holder-cont-deterministic-step-3} may be proved with minor modifications.

\begin{proof}[Proof of Proposition \ref{prop:Holder-cont-deterministic-step} for $d=1$]
Recall that $\mu*\mu$ is the image of $\mu\times\mu$ under the addition map $\Pi\colon(x,y)\mapsto x+y$, $\R^2\to\R$. We identify $\Pi$ with the orthogonal projection onto the diagonal line $\{x=y\}\subset\R^2$ multiplied by a factor $\sqrt{2}$. Thus, for all $x\in\R$ and $r>0$, Fubini's theorem implies
\begin{equation}\label{eq:coarea}
\begin{split}
\mu*\mu(B(x,r))&=(\mu\times\mu)(\Pi^{-1}B(x,r))\\
&\le \liminf_{n\to\infty} \mu_n\times\mu_n(\Pi^{-1}(B(x,r)))\\
&=   \liminf_{n\to\infty} \beta_n^2\leb^{2}(\Pi^{-1}(B(x,r))\cap (A_n\times A_n)) \\
&=\frac{1}{\sqrt{2}}\liminf_{n\to\infty} \int_{B(x,r)} \beta_n^2  \mathcal{H}^1( \Pi^{-1}(u)\cap  (A_n\times A_n)) du\\
&=\frac{1}{\sqrt{2}}\liminf_{n\to\infty} \int_{B(x,r)} Y_n^{u} \, du\,.
\end{split}
\end{equation}

To complete the proof, it is enough to show that $Y^u:=\lim_{n\to\infty}\int_{\Pi^{-1}(u)}\mu_n\times\mu_n\,d\mathcal{H}^1$ is well defined for all $u\in\R$, and that $u\mapsto Y^u$ is H\"older continuous. Indeed, once this is verified, it follows that $\sup_{n\in\N, u\in[0,2]}Y_{n}^u$ is bounded, and \eqref{eq:coarea} yields $\mu*\mu(B(x,r))\le C r$ for all $x\in\R$. Moreover, replacing the open balls in \eqref{eq:coarea} with closed balls, it follows that \eqref{eq:coarea} is actually an equality. Furthermore, the density $d\mu*\mu(x)$ equals $Y^{x}/\sqrt{2}$. In particular, $Y^x$ is not zero for all $x$, since $\mu$ is non-trivial.

The proof of the H\"older continuity of $u\mapsto Y^u$ relies on the following modulus of continuity for $u\mapsto Y_{n}^u$:
\begin{equation}\label{eq:aprioriHolder}
|Y_{n}^u-Y_{n}^{u'}|\le 3\beta_n^2 M^{n}|u-u'|\,.
\end{equation}
This follows by elementary geometry. Indeed, for each fixed $Q\in\mathcal{Q}_n^2$, the map $u\mapsto\mathcal{H}^1(\Pi^{-1}(u)\cap Q)$ is Lipschitz continuous with Lipschitz constant $\sqrt{2}$. Since each $\Pi^{-1}(u)$ intersects at most $2 M^{n}$ such cubes, the estimate \eqref{eq:aprioriHolder} follows.

Since $\Gamma_n$ is $\delta_n$-dense with $\delta_n=M^{-n(\widetilde{\gamma}+1)}\beta_{n+1}^{-2}$, Equation \eqref{eq:aprioriHolder} implies that for each $u\in[0,2]$, there is $u'\in\Gamma_n$ such that $|Y^u_{n+1}-Y^{u'}_{n+1}|\le 3\beta_{n+1}^2 M^{n+1}|u-u'|\le 3 M^{1-n\widetilde{\gamma}}$, and likewise for $|Y^u_n-Y^{u'}_n|$.

Let $X_n=1+\sup_{u\in[0,2]}Y_{n}^u$. Combining the above with \eqref{eq:speed_assumption}, and using the triangle inequality for $u$, we get
\begin{equation}\label{eq:X_n}
\begin{split}
|X_{n+1}-X_n|&\le\sup_{u\in [0,2]}|Y_{n+1}^u-Y_n^u|\\
&\le 6\beta_{n+1}^2M^{n+1}\delta_n+\max_{u\in\Gamma_n}|Y_{n+1}^u-Y_n^u|\\
&\le CM^{-n\widetilde{\gamma}}X_n\,,
\end{split}
\end{equation}
for all $n\in\N$. This implies that $X_{n}$ converges to a finite limit $X$, so in particular $\overline{X}=\sup_{n}X_n$ is finite, and that (absorbing $\overline{X}$ into the constant $C$)
\begin{align*}
|X_n-X|\le\sum_{k\ge n}|X_{k+1}-X_k|\le C M^{-n\widetilde{\gamma}}
\end{align*}
holds for all $n\in\N$. Furthermore, the same is true for $Y_{n}^u$, that is, the limit $Y^u=\lim_n Y_{n}^u$ exists for all $u\in\R$, and
\begin{equation}\label{eq:speed_Yn}
|Y_n^u-Y^u|\le C M^{-n\widetilde{\gamma}}\,,
\end{equation}
whenever $n\in\N$ and $u\in[0,2]$.

Define $\gamma_0=0$ and  $\gamma_{m+1}=\widetilde{\gamma}/(1+\widetilde{\gamma}-\gamma_m)$. We will show that $u\mapsto Y^u$ is H\"older with exponent $\gamma_m$ for all  $m\in\N$. Since $\lim_{m\to\infty}\gamma_m=\widetilde{\gamma}$,
we thus recover all the H\"older exponents $\gamma<\widetilde{\gamma}$. We proceed by induction on $m$. Since $\overline{X}$ is bounded, we have a uniform upper bound for $Y^{u}$ (so we may say that $u\mapsto Y^u$ is H\"older with exponent $\gamma_0=0$). Suppose that $u\mapsto Y^u$ is $\gamma_m$-H\"older, i.e. $|Y^u-Y^{u'}|\le C|u-u'|^{\gamma_m}$ for all $u,u'\in[0,2]$. Let $Q=[a,b]\in\mathcal{Q}_n$. Recalling \eqref{eq:speed_Yn} and using the triangle inequality,
\begin{equation}\label{eq:triangle}
\begin{split}
|Y_n^b-Y_n^{a}|&\le |Y^b_n-Y^b|+|Y^b-Y^a|+|Y^a-Y^a_n|\\
&\le C M^{-n\widetilde{\gamma}}+CM^{-n\gamma_m}
\le C M^{n(1-\gamma_m)}M^{-n}\,.
\end{split}
\end{equation}
(Note that $\gamma_m<\widetilde{\gamma}$ for all $m$.) Since each preimage $\Pi^{-1}(Q)$, $Q\in\mathcal{Q}_n$, consists of halves of cubes in $\mathcal{Q}_n^2$, either above or below the top-left to bottom-right diagonal, we see that the map $u\mapsto Y^u_n$ is linear on each $Q\in\mathcal{Q}_n$ (and this holds also for the $M$-adic subcubes of $[1,2)$). Hence we arrive at the following key estimate:
\begin{equation}\label{eq:Lipschitz}
|Y_n^u-Y_n^{u'}|\le C M^{n(1-\gamma_m)}|u-u'|\text{ whenever }|u-u'|\le M^{-n}\,.
\end{equation}
Note that this improves upon the crude estimate \eqref{eq:aprioriHolder}. Now let $u,u'\in[0,2]$ be arbitrary and let $n\in\N$ such that $M^{-n(1+\widetilde{\gamma}-\gamma_m)}<|u-u'|\le M^{-(n-1)(1+\widetilde{\gamma}-\gamma_m)}$. Then \eqref{eq:speed_Yn} and \eqref{eq:Lipschitz} yield
\begin{align*}
|Y^u-Y^{u'}|&\le |Y^u-Y^{u}_n|+|Y^u_{n}-Y^{u'}_n|+|Y^{u'}_n-Y^{u'}|\\
&\le C M^{-n\widetilde{\gamma}}+C M^{n(1-\gamma_m)}|u-u'|\,.
\end{align*}
Recalling the choice of $n$, both terms are bounded by $C|u-u'|^{\gamma_{m+1}}$. This confirms that $u\mapsto Y^u$ is H\"older continuous with exponent $\gamma_{m+1}$, finishing the proof.
\end{proof}

In order to establish Theorem \ref{thm:self-convolutions}, it then remains to check that the self-convolutions satisfy the assumptions of Proposition \ref{prop:Holder-cont-deterministic-step}. For this, we will use a generalization of Hoeffding's inequality due to Janson \cite{Janson04}, which allows for some dependencies among the random variables. Recall that a graph with a vertex set $I$ is a \emph{dependency graph} for the random variables $\{X_i\,:\,i\in I\}$ if whenever $i\in I$ and $J\subset I$ are such that there is no edge between $i$ and any element of $J$, the random variable $X_i$ is independent of $\{X_j\,:\,j\in J\}$.
\begin{lemma} \label{lem:HoeffdingJanson}
Let $\{ X_i: i\in I\}$ be zero mean random variables uniformly bounded by $R>0$, and suppose there is a dependency graph with degree $\Delta$. Then
\begin{equation} \label{eq:Hoeffding-Janson}
\mathbb{P}\left(\left|\sum_{i\in I} X_i\right|> \varrho\right) \le 2\exp\left(\frac{- 2 \varrho^2}{(\Delta+1)|I| R^2}\right).
\end{equation}
\end{lemma}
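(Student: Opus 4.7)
The plan is to reduce to the independent case by a chromatic decomposition of the dependency graph, and then apply the usual Chernoff--Hoeffding scheme. By greedy coloring, any graph with maximum degree $\Delta$ has chromatic number at most $\Delta+1$, so I would partition $I$ into color classes $I_1,\ldots,I_\chi$ with $\chi\le \Delta+1$, each an independent set in the dependency graph. A short induction on $|I_j|$ using the defining property of the dependency graph (every $X_i$ is independent of the tuple of variables indexed by any subset of $I_j$ containing no neighbor of $i$) upgrades this to \emph{mutual} independence of $\{X_i:i\in I_j\}$ within each class.

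The rest is a textbook argument. Writing $S=\sum_i X_i$ and $S_j=\sum_{i\in I_j} X_i$, Hölder's inequality with $\chi$ equal exponents yields
\[
\EE e^{tS}=\EE\prod_{j=1}^{\chi}e^{tS_j}\le\prod_{j=1}^{\chi}\bigl(\EE e^{t\chi S_j}\bigr)^{1/\chi}.
\]
Inside a color class, the standard Hoeffding lemma $\EE e^{sX_i}\le e^{s^2R^2/2}$ (valid since $X_i$ is mean-zero and $|X_i|\le R$) combined with independence gives $\EE e^{t\chi S_j}\le\exp(\tfrac12 t^2\chi^2|I_j|R^2)$, and multiplying and extracting the $1/\chi$-th root leaves $\EE e^{tS}\le\exp(\tfrac12 t^2\chi|I|R^2)$. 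Chernoff's inequality $\PP(S>\varrho)\le e^{-t\varrho}\EE e^{tS}$, optimized at $t=\varrho/(\chi|I|R^2)$, then produces a one-sided estimate with $\chi$ in the denominator of the exponent; bounding $\chi\le\Delta+1$ and running the same argument for $-S$ gives the two-sided bound in the form stated.

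No step is genuinely obstructive. The two places that deserve a second glance are (i) that the dependency-graph axiom is stated asymmetrically (in terms of one vertex $i$ versus a set $J$), so one should verify that an induction indeed produces mutual independence of a full color class; and (ii) the precise absolute constants in the exponent, for which one may need the sharp form of Hoeffding's lemma $\EE e^{sX}\le\exp\bigl(s^2(b-a)^2/8\bigr)$ applied to $X_i\in[-R,R]$, or one may simply refer to \cite{Janson04}, where the optimal constants (in fact with the fractional chromatic number in place of $\Delta+1$) are obtained.
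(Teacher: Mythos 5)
The paper does not prove Lemma~\ref{lem:HoeffdingJanson}; it cites Janson~\cite{Janson04} and moves on. Your proposal reconstructs the standard proof, which is in fact Janson's argument: greedy coloring into $\chi\le\Delta+1$ independent classes, the induction upgrading pairwise ``vertex-vs-set'' independence to mutual independence within each class, and the H\"older/Chernoff step. All of that is sound.

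The one substantive issue is the constant. Carrying out your computation honestly yields
\[
\PP\left(\left|\sum_{i\in I} X_i\right|> \varrho\right) \le 2\exp\left(\frac{-\varrho^2}{2(\Delta+1)\,|I|\,R^2}\right),
\]
whereas the lemma as stated in the paper claims the exponent $-2\varrho^2/\bigl((\Delta+1)|I|R^2\bigr)$, which is stronger by a factor of $4$. Your second ``sanity check'' does not close this gap: for a zero-mean variable with $|X_i|\le R$ the interval $[a,b]=[-R,R]$ has length $2R$, so the sharp Hoeffding lemma $\EE e^{sX}\le\exp\bigl(s^2(b-a)^2/8\bigr)$ gives exactly the bound $\exp(s^2R^2/2)$ you already used, and Janson's Theorem~2.1 itself, specialized to $|X_i|\le R$ and $\chi^*\le\Delta+1$, produces the same $-\varrho^2/\bigl(2(\Delta+1)|I|R^2\bigr)$. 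The cleanest reading is that the paper's statement is an imprecise transcription of Janson (one obtains the stated constant if ``uniformly bounded by $R$'' were interpreted as ``range of $X_i$ at most $R$,'' but the applications in the paper use $R$ as a bound on $|X_Q|$). This discrepancy is harmless: every invocation of the lemma in the paper absorbs the multiplicative constant in the exponent into a generic~$c$. So your proof is correct as an argument, matches the intended route, and in fact gives the bound one should write down; it just does not (and cannot) produce the paper's slightly too optimistic constant.
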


The case of double convolutions is substantially simpler than the triple ones, so we present its proof first.
\begin{proof}[Proof of Theorem \ref{thm:self-convolutions} for $\mu*\mu$ in the case $d=1$]

For $u\in[0,2]$ and $n\in\N$, let $Y_n^u$ be defined as in Proposition \ref{prop:Holder-cont-deterministic-step}.
For each fixed $u\in[0,2]$, we will consider several subfamilies of $\mathcal{Q}_n\times\mathcal{Q}_n$ as follows: Let $Q'$ denote the unique diagonal cube $Q'=Q\times Q\in\mathcal{Q}_n\times\mathcal{Q}_n$ which intersects $\Pi^{-1}(u)$.

Further, for each $n\le\ell\in\N$, set
\[
\widetilde{\mathcal{Q}}^\ell=\{Q\in\mathcal{Q}_n^2\setminus\{Q'\}\,:\,Q\subset A_n\times A_n, M^{-(\ell+1)}\le\mathcal{H}^1(\Pi^{-1}(u)\cap Q)\leq M^{-\ell}\}\,.
\]
Given $Q\in\Q_n^2$, let
\[
X_Q  = \int_{Q\cap \Pi^{-1}(u)} (\mu_{n+1}\times\mu_{n+1}-\mu_n\times\mu_n) \, d\mathcal{H}^1\,.
\]
For each $\ell$ we define a graph on the vertices $\widetilde{\mathcal{Q}}^\ell$ as follows: there is an edge between $(I_1\times J_1)$ and $(I_2\times J_2)$ (where $I_i,J_i\in\Q_n$) if and only if $\{ I_1, J_1\}\cap \{ I_2,J_2\}\neq\varnothing$. It follows from \eqref{SI} that this is a dependency graph for $(X_Q|A_n)_{Q\in \widetilde{\mathcal{Q}}^\ell}$. Elementary geometry shows that the degree of this graph is at most $8$: given $I\in\Q_n$, there are at most two intervals $J\in\Q_n$ such that $u\in \Pi(I\times J)$ and at most two intervals $J'\in\Q_n$ such that $u\in \Pi(J'\times I)$.

On the other hand, $\EE(X_Q|A_n)=0$ for all $Q\in\widetilde{\mathcal{Q}}^\ell$ by \eqref{martingale_condition}. (The reason we exclude $Q'$ is that $\EE(X_{Q'}|A_n)\neq 0$.) Furthermore,
\[
|X_Q|\le \sqrt{2}\beta^2_{n+1} M^{-\ell}
\]
for all $Q\in\widetilde{\mathcal{Q}}^{\ell}$ and
\[
\#\widetilde{\mathcal{Q}}^{\ell}\le M^{\ell+1}\beta_n^{-2} Y_n^{u}\,,
\]
since $Y^{u}_n\ge\sum_{Q\in\widetilde{\mathcal{Q}}^{\ell}}\beta_n^2\,\mathcal{H}^1(Q\cap \Pi^{-1}(u))$. Applying Lemma \ref{lem:HoeffdingJanson}, we get
\begin{equation}\label{eq:Hoefdinng_kappa_l}
\PP\left(\left|\sum_{Q\in\widetilde{\mathcal{Q}}^\ell}X_Q\right|>\kappa_\ell\sqrt{Y_{n}^u}\right)\le 2\exp\left(\frac{-c\beta^2_n\kappa_\ell^2 M^{\ell-1}}{\beta_{n+1}^4}\right)
\end{equation}
for any $\kappa_\ell>0$. To be more precise, this bound holds conditional on $A_n$, but as the upper bound does not actually depend on $A_n$, it also holds unconditionally.

Pick $\tau>0$ such that $0<2\tau<1/2-\overline{\alpha}$ and define
\[\varepsilon_n=M^{n(\overline{\alpha}+\tau-1/2)}\,,\]
and
\[\kappa_\ell=\frac{\varepsilon_n}{2(\ell-n+2)^2}\,.\]
We note the bounds
\[
\sum_{n=\ell-1}^\infty \kappa_\ell \le 1,
\]
\begin{equation}\label{eq:diagonalQ_est}
X_{Q'}\le \sqrt{2}\beta_{n+1}^2M^{-n}\le C'\e_n,
\end{equation}
\[
M^{\ell-n}(\ell-n+2)^{-4}  \ge c(\ell-n+2)\text{ for }\ell> n-2,\\
\]
\[
\beta_n \le C M^{(\overline{\alpha}+\tau/4)n},
\]
\[
\beta_{n+1}\le C\beta_n M^{n\tau/4},
\]
where the last one follows from \eqref{eq:beta_n_bound}. Using this, summing up over all $\ell> n-2$, and noting that $\widetilde{\mathcal{Q}}^{\ell}=\varnothing$ for $\ell\le n-2$, we arrive at
\begin{equation}\label{eq:corollary_of_Hoeffding}
\begin{split}
&\PP\left(|Y_{n+1}^u-Y_n^u|>C'\e_n+\sqrt{Y_n^u}\varepsilon_n\right)\\
&\le\PP\left(\left|\sum_{Q\in\widetilde{\mathcal{Q}}^\ell}X_Q\right|>\kappa_\ell\sqrt{Y_{n}^u}\text{ for some }\ell\right)\\
&\le\sum_{\ell\ge n-1}2\exp\left(\frac{-c\beta^2_n\kappa_\ell^2 M^{\ell-1}}{\beta_{n+1}^4}\right)\\
&\le\sum_{\ell\ge n-1}2\exp\left(-c M^{-(2\overline{\alpha}+\tau/2)n} M^{-\tau n/2} M^{2(\overline{\alpha}+\tau-1/2)n} M^{n}M^{\ell-n+2}(\ell-n+2)^{-4}   \right)\\
&\le\sum_{\ell\ge n-1}2\exp\left(-c M^{\tau n} (\ell-n+2)  \right)\\
&\le C\exp(-c M^{\tau n})\,.
\end{split}
\end{equation}

We emphasize that the constants $c,C,C'$ are independent of $n, u, \ell$ and that $C'$ is the constant from \eqref{eq:diagonalQ_est}. Now let $\Gamma_n\subset[0,2]$ be a $\delta_n$-dense set, with $\#\Gamma_n\le 2 \delta_n^{-1}$ for $\delta_n=\beta_{n+1}^{-2} M^{-n(\overline{\alpha}-\tau-3/2)}$.
From \eqref{eq:corollary_of_Hoeffding}, we derive the following estimate
\begin{align*}
&\PP\left(|Y_{n+1}^u-Y_{n}^u|>C'\e_n +\sqrt{Y_{n}^u}\,\e_n\text{ for some }u\in\Gamma_n\right)<C\delta_n^{-1}\exp\left(-c M^{\tau n}\right)\\
&\le C M^{n(3\overline{\alpha}+2\tau+3/2)}\exp(-c M^{\tau n})\le C\exp(-M^{\tau n/2})\,,
\end{align*}
for some $C<\infty$ independent of $n$. The Borel-Cantelli lemma yields a random $N_0\in\N$ such that
\[
|Y_{n+1}^u-Y_{n}^u|\le C M^{n(\overline{\alpha}+\tau-1/2)}(1+\sqrt{Y_n^u})\quad\text{for all }n\ge N_0, u\in\Gamma_n\,.
\]
Making $C$ larger if necessary (depending on $N_0$), this holds for all $n\in\N$. Thus we have verified the assumptions of Proposition \ref{prop:Holder-cont-deterministic-step} for $\widetilde{\gamma}=1/2-\overline{\alpha}-\tau$. Since $\tau>0$ is arbitrarily small, this finishes the proof.
\end{proof}

We turn to the case of triple convolutions.

\begin{proof}[Proof of Theorem \ref{thm:self-convolutions} for $\mu*\mu*\mu$ in the case $d=1$]
We will give a detailed proof for $\overline{\alpha}\in (1/2,2/3)$ and briefly explain the required changes for $0<\overline{\alpha}\le 1/2$ at the end. We will skip the details of calculations that are routine or similar to chose carried our in the course of the proof of the double convolution case.

For $u\in[0,3]$, let $Y_n^u$ and $\Pi$ be defined as in the Proposition \ref{prop:Holder-cont-deterministic-step-3}.
The main reason why we cannot directly apply the same argument as for $\mu*\mu$ is that independence and the martingale condition break down in a much more severe way. For instance, if we let
\begin{equation}\label{eq:X_Q}
X_Q=\int_{Q\cap \Pi^{-1}(u)}(\mu_{n+1}\times\mu_{n+1}\times\mu_{n+1}-\mu_n\times\mu_n\times\mu_n)\,d\mathcal{H}^2\,,\end{equation}
the random variables $(X_{Q_i})$ are not independent conditional on $A_n$, whenever $Q_i\in\mathcal{Q}^3_n$ have the same projection onto one of the coordinate axes. Since $\Pi^{-1}(u)$ intersects planes of the form $\{x=c\}$, $\{y=c\}$, $\{z=c\}$ along a line, there could be many such dependent cubes contained in $A_{n}^3$. Regarding the failure of the martingale condition, note that if $Q\in\mathcal{Q}^3_n$ has two common coordinate projections (e.g. $Q=Q_1\times Q_1\times Q_2$ for some $Q_{1},Q_2\in\mathcal{Q}_n$), and if $Q\subset A_{n}^3$, then $\EE(X_Q\,|\,A_n)\neq 0$. Again, unlike in the planar case, there can be many such cubes $Q\subset A_{n}^3$ along the semidiagonals $\{x=y\}$, $\{x=z\}$, $\{y=z\}$.

To overcome these issues, we will do a joint probabilistic induction in $n$ for our main quantity of interest $Y_n^u$ and for related, two-dimensional quantities (involving the marginals $\mu_n\times\mu_n$) that will allow us to find dependency graphs for the $X_Q$ with suitably small degrees, so that ultimately the general scheme in the proof of the double convolution case can be pushed through. We note that for higher order convolutions, a similar argument still works but involves an even more complicated induction also in the order of the convolution.

Let us denote by $\mathcal{R}$ the family of lines in $\R^2$ of the form
\begin{align*}
x+y&=u''\text{ or }\\
2x+y&=u''\text{ or}\\
x+2y&=u''\,,
\end{align*}
for some $u''\in[0,3]$. For $V\in\mathcal{R}$, define
\[
\widetilde{Y}^{V}_n=\int_V\mu_n\times\mu_n\,d\mathcal{H}^1\,.
\]
Since $\overline{\alpha}>1/2$, the random variables $\widetilde{Y}^{V}_n$ will no longer be uniformly bounded. However, using ideas similar to the case $\overline{\alpha}<1/2$, we will be able to derive a rather sharp growth estimate for $\sup_{V\in\mathcal{R}}\widetilde{Y}^V_n$. To this end, define
\[
\widetilde{X}_n=1+\sup_{V\in\mathcal{R}}\widetilde{Y}^V_n\,.
\]
Let $\widetilde{\gamma},\varepsilon>0$ be such that
\begin{equation}\label{eq:widgamma_def}
2-3\overline{\alpha}-2\widetilde{\gamma}-4\varepsilon>0.
\end{equation}
We consider a very large parameter $L<\infty$ (which remains fixed for now, but will tend to $+\infty$ later on), and claim that
\begin{equation}\label{eq:to_bound_the_dep_degree}
\PP(\widetilde{X}_{n+1}>L M^{(n+1)(2\overline{\alpha}-1+2\varepsilon)}\,|\,\widetilde{X}_n\le L M^{n(2\overline{\alpha}-1+2\varepsilon)})\le C\exp(-c M^{n\varepsilon/2})
\end{equation}
for some $0<c,C<\infty$ independent of $L$ and $n$. In order not to interrupt the flow of the proof, this is proved in Lemma \ref{lem:technical-estimate}
below.

Now, let us return to the random variables $Y_n^u$. Write $Y_{n+1}^u-Y_n^u=\sum_{Q\in\mathcal{Q}_{u,n}}X_Q$, where $X_Q$ is defined as in \eqref{eq:X_Q} and $\mathcal{Q}_{u,n}$ consists of those $Q\in\mathcal{Q}^3_n$ for which $A_n^3\cap Q\cap\Pi^{-1}(u)\neq\varnothing$.  We claim that for each $u\in [0,3]$ there is a dependency graph for $(X_Q|A_n: Q\in \mathcal{Q}_{u,n})$ of degree at most
\begin{equation}\label{eq:dep_degree}
\Delta(n)=C\widetilde{X}_n\beta_{n}^{-2}M^{n},
\end{equation}
where $C$ is independent of $u$ and $n$. This will be proved in Lemma \ref{lem:dependency-degree-bound} below. Let $\mathcal{Q}'_{u,n}\subset \mathcal{Q}_{u,n}$ be the family of semidiagonal cubes, that is, cubes intersecting one of the semi-diagonals $\{x=y\}$, $\{x=z\}$, $\{y=z\}$. In Lemma \ref{lem:dependency-degree-bound} we will also show that $\#\mathcal{Q}'_{u,n}\le \Delta(n)$. In particular,
\[
\sum_{Q\in\mathcal{Q}'_{u,n}}X_Q \le C\#\mathcal{Q}'_{u,n}\beta_{n+1}^3M^{-2n} \le C \widetilde{X}_n \beta_n^{-2}\beta_{n+1}^3 M^{-n},
\]
so we can estimate
\begin{equation} \label{eq:estimate-diagonal-cubes}
|Y_{n+1}^u-Y_{n}^u|\le C \widetilde{X}_n \beta_n^{-2}\beta_{n+1}^3 M^{-n}+\sum_{Q\in\mathcal{Q}_{u,n}\setminus\mathcal{Q}'_{u,n}} X_Q\,.
\end{equation}

Note that \eqref{martingale_condition} implies $E(X_Q|A_n)=0$ for all $Q\in\Q_{u,n}\setminus \Q'_{u,n}$. Thus, after conditioning on $A_n$, we can decompose $\Q_{u,n}\setminus \Q'_{u,n}$ into families $\Q_{u,n,\ell}$ according to the area of $Q\cap \Pi^{-1}(u)$, similar to the proof in the case of double-convolutions. Applying Lemma \ref{lem:HoeffdingJanson} for each of these families, with $\Delta=\Delta(n)$ given in \eqref{eq:dep_degree},  together with \eqref{eq:estimate-diagonal-cubes}, a calculation similar to \eqref{eq:Hoefdinng_kappa_l}--\eqref{eq:corollary_of_Hoeffding} yields the following estimate for each $u\in[0,3]$:
\begin{align*}
\PP&\left(|Y^u_{n+1}-Y^u_n|>C L \beta_{n+1}^3\beta_n^{-2} M^{n(2\overline{\alpha}-2+2\varepsilon)}+L M^{-n\widetilde{\gamma}}\sqrt{Y^{u}_n}\,|\,\widetilde{X}_n\le L M^{n(2\overline{\alpha}-1+2\varepsilon)}\right)\\
&\le 2\exp\left(-c L \beta_{n}^5\beta_{n+1}^{-6}M^{n(2-2\overline{\alpha}-2\widetilde{\gamma}-2\varepsilon)}\right)\,,
\end{align*}
for some constants $0<c,C<\infty$ independent of $n$, $L$ and $u$. Note that here $\beta_{n+1}^3\beta_n^{-2} M^{n(2\overline{\alpha}-2+3\varepsilon)}\le C M^{-2\widetilde{\gamma}n}$ due to \eqref{eq:beta_n_bound}, \eqref{eq:widgamma_def} and the definition of $\overline{\alpha}$ and furthermore,
\[
\beta_{n}^5\beta_{n+1}^{-6}M^{n(2-2\overline{\alpha}-2\widetilde{\gamma}-2\varepsilon)}\ge c M^{n(2-3\overline{\alpha}-2\widetilde{\gamma}-3\varepsilon)}\ge c M^{n\varepsilon}
\]
for some $0<c,C<\infty$ independent of $n$.

Let $\Gamma_n\subset[0,3]$ be $\delta_n$-dense with $\#\Gamma_n\le C\delta_{n}^{-1}$ where $\delta_n=\beta_{n+1}^{-3}M^{(-2-\widetilde{\gamma})n}$. Applying the above
estimate for each $u\in\Gamma_n$ yields
\begin{align}
\notag
\PP&\left(|Y^u_{n+1}-Y^u_{n}|>C L M^{-n\widetilde{\gamma}} (1+\sqrt{Y_n^u})\text{ for some }u\in\Gamma_n\,|\,\widetilde{X}_n\le L M^{n(2\overline{\alpha}-1+2\varepsilon)}\right)\\
&\le C \delta^{-1}_{n}\exp(-c L M^{\varepsilon n})\le C\exp(-LM^{\varepsilon n/2})\,.
\label{eq:denseGn}
\end{align}

Now let $\mathcal{A}_{n,L}$ denote the event that $\widetilde{X}_{n+1}\le L M^{n(2\overline{\alpha}-1+2\varepsilon)}$ and $|Y_{n+1}^u-Y_{n}^u|\le L M^{-n\widetilde{\gamma}}(1+\sqrt{Y_n^u})$ for all $u\in\Gamma_n$. Combining \eqref{eq:to_bound_the_dep_degree} and \eqref{eq:denseGn} gives $\PP(\mathcal{A}_{n+1,L}|\mathcal{A}_{n,L})\ge 1-C\exp(-cM^{n\varepsilon/2})$, so that
\[
\PP(\mathcal{A}_{n,L} \text{ holds for all } n\ge N) \longrightarrow 1
\]
as $N\to\infty$, uniformly in $L$. On the other hand, for each $N\in\N$, the events $\mathcal{A}_{n,L}$ for $n\le N$ hold deterministically, provided $L=L_{N}$ is chosen large enough. Combining these facts, we conclude that a.s. there is $L<\infty$ such that $\mathcal{A}_{n,L}$ holds for all $n\in\N$. In particular,
\[
|Y_{n+1}^u-Y_n^u|\le L M^{-n\widetilde{\gamma}}(1+\sqrt{Y_n^u})
\]
for all $n\in\N$, $u\in\Gamma_n$.

Recalling \eqref{eq:widgamma_def}, we see that $\widetilde{\gamma}$ may be chosen to be arbitrarily close to $1-\tfrac32\overline{\alpha}$, so we recover all the desired H\"older exponents from Proposition \ref{prop:Holder-cont-deterministic-step-3}.

Finally, let us briefly discuss the situation for $\overline{\alpha}<1/2$. Note that the H\"older continuity (with exponents $0<\gamma<1/2-\overline{\alpha}$) follows since $\mu*\mu$ is a.s H\"older continuous and adding one more convolution cannot decrease the H\"older exponent. It is possible to improve the H\"older exponent to the range $\gamma<\tfrac{\overline{\alpha}}{2}-\tfrac12$ by modifying the above argument in the case $\tfrac12\le \overline{\alpha}<\tfrac32$. Inspecting the proof,  we see that we obtained for all $\varepsilon>0$ an a.s. upper bound of order $M^{n\varepsilon}$ for the degree of the dependency graph of $\mathcal{Q}_{u,n}$. When $\overline{\alpha}<1/2$, the degree of the dependency graph will be much larger, but an efficient bound can still be given by following the ideas in the proof of the double convolution case. Tracking the numerical values one checks that the assumptions of Proposition \ref{prop:Holder-cont-deterministic-step-3} hold true $(\mu_n)$ a.s. for all $\widetilde{\gamma}<\tfrac{\overline{\alpha}}{2}-\tfrac12$
\end{proof}

\begin{lemma} \label{lem:technical-estimate}
The estimate  \eqref{eq:to_bound_the_dep_degree} holds for large enough $L$.
\end{lemma}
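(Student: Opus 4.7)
The plan is to adapt the Hoeffding--Janson argument used in the $\mu*\mu$ case of Theorem \ref{thm:self-convolutions}, now applied to each line $V \in \mathcal{R}$, and then combined with a net argument over the three one-parameter families of lines making up $\mathcal{R}$. The inductive step propagates the hypothesis $\widetilde{X}_n \leq L M^{n(2\overline{\alpha}-1+2\varepsilon)}$ from $n$ to $n+1$ with conditional failure probability $\leq C\exp(-cM^{n\varepsilon/2})$, once $L$ is large depending only on $\overline{\alpha}$ and $\varepsilon$.

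Fix $V \in \mathcal{R}$ and decompose $\widetilde{Y}^V_{n+1}-\widetilde{Y}^V_n=\sum_Q X_Q^V$ with $X_Q^V=\int_{V\cap Q}(\mu_{n+1}\times\mu_{n+1}-\mu_n\times\mu_n)\,d\mathcal{H}^1$, summed over $Q\in\mathcal{Q}_n^2$ with $Q\subset A_n^2$ and $Q\cap V\neq\varnothing$. Since any line in $\mathcal{R}$ meets the diagonal $\{x=y\}$ in at most one point, at most one cube $Q=I\times I$ fails $\EE(X_Q|A_n)=0$, and its contribution is bounded deterministically by $2\beta_{n+1}^2 M^{-n}\leq CM^{n(2\overline{\alpha}-1+\tau)}$, which is negligible compared to the allowed multiplicative growth $LM^{n(2\overline{\alpha}-1+2\varepsilon)}(M^{2\overline{\alpha}-1+2\varepsilon}-1)$ once $L$ is large and $\tau<\varepsilon$. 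Partition the remaining cubes into $\widetilde{\mathcal{Q}}^\ell$ according to $M^{-\ell-1}\leq \mathcal{H}^1(V\cap Q)\leq M^{-\ell}$, giving $|X_Q|\leq 2\beta_{n+1}^2 M^{-\ell}$ and $\#\widetilde{\mathcal{Q}}^\ell \leq M^{\ell+1}\beta_n^{-2}\widetilde{Y}_n^V$.

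Next I would verify that $(X_Q|A_n)_{Q\in\widetilde{\mathcal{Q}}^\ell}$ admits a dependency graph of degree $O(1)$. Since $X_Q$ depends only on the offspring of the two coordinate intervals of $Q=I\times J$, two such variables are dependent only if their cubes share a coordinate; for each fixed $I\in\mathcal{Q}_n$ a line of the form $ax+by=u''$ with $(a,b)\in\{(1,1),(2,1),(1,2)\}$ meets only boundedly many $J\in\mathcal{Q}_n$, so the degree is universally bounded. Applying Lemma \ref{lem:HoeffdingJanson} class-by-class with $\rho_\ell=\kappa_\ell\sqrt{\widetilde{Y}_n^V}$ and $\kappa_\ell=\sqrt{L}M^{n(\overline{\alpha}-1/2+\varepsilon)}(\ell-n+2)^{-2}$, a calculation parallel to \eqref{eq:Hoefdinng_kappa_l}--\eqref{eq:corollary_of_Hoeffding} yields an exponent of order $cLM^{n(2\varepsilon-3\tau/2)}(\ell-n+2)^{-4}M^{\ell-n}$, where the factor $\beta_{n+1}^4/\beta_n^2\leq CM^{2n\overline{\alpha}+3n\tau/2}$ (from \eqref{eq:beta_n_bound} and the definition of $\overline{\alpha}$) is absorbed. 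Summing over $\ell\geq n-O(1)$ gives a tail bound of $C\exp(-cLM^{n\varepsilon/2})$ for $\tau$ small relative to $\varepsilon$. The summed size $\sum_\ell \kappa_\ell\sqrt{\widetilde{Y}_n^V}\leq \sqrt{L}M^{n(\overline{\alpha}-1/2+\varepsilon)}\sqrt{LM^{n(2\overline{\alpha}-1+2\varepsilon)}}=LM^{n(2\overline{\alpha}-1+2\varepsilon)-n\varepsilon/2}$ is comfortably inside the budget.

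Finally I would introduce a $\delta_n$-net $\Gamma_n$, with $\#\Gamma_n\lesssim \delta_n^{-1}$, over the parameters $u''$ of the three families of lines, choosing $\delta_n$ polynomially small in $M^n\beta_{n+1}^2$ so that an elementary Lipschitz estimate for $u''\mapsto\widetilde{Y}_n^V$ (analogous to \eqref{eq:aprioriHolder}, bounding the change by $C\beta_n^2 M^n |u''-\widetilde{u}''|$) reduces the supremum $\sup_{V\in\mathcal{R}}\widetilde{Y}_{n+1}^V$ to its values on the net. A union bound over $\Gamma_n$ absorbs the polynomial factor $\delta_n^{-1}$ into the stretched exponential, yielding \eqref{eq:to_bound_the_dep_degree}. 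The main obstacle is balancing the numerology simultaneously: $\varepsilon$, $\tau$, $\kappa_\ell$, and $\delta_n$ must be chosen so that the allowed multiplicative growth $(M^{2\overline{\alpha}-1+2\varepsilon}-1)$ swallows both the diagonal contribution and the summed fluctuation budget $\sum_\ell\kappa_\ell\sqrt{\widetilde{Y}_n^V}$, the Hoeffding--Janson exponent remains stretched-exponential in $n$, and the net cardinality does not spoil this; the crucial geometric ingredient beyond the $\mu*\mu$ case is the bounded-degree property of the dependency graph for all three families of lines in $\mathcal{R}$ at once.
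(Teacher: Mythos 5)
Your overall strategy is the same as the paper's: decompose $\widetilde{Y}^V_{n+1}-\widetilde{Y}^V_n$ into $\sum_Q X_Q$, remove the single diagonal cube where $\EE(X_Q|A_n)\neq 0$, split the remaining cubes into classes $\widetilde{\mathcal{Q}}^\ell$ by intersection length, apply Lemma~\ref{lem:HoeffdingJanson} with a bounded-degree dependency graph (bounded because each of the three families of lines in $\mathcal{R}$ meets $O(1)$ cubes over a fixed $I\in\mathcal{Q}_n$), then union bound over a $\delta_n$-net of lines using the Lipschitz estimate \eqref{eq:aprioriHolder}. All of this is right and matches the paper.

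However, there is a concrete arithmetic slip that hides a genuine problem with your choice of $\kappa_\ell$. You claim
\[
\sum_\ell \kappa_\ell\sqrt{\widetilde{Y}_n^V}\leq \sqrt{L}M^{n(\overline{\alpha}-1/2+\varepsilon)}\sqrt{LM^{n(2\overline{\alpha}-1+2\varepsilon)}}=LM^{n(2\overline{\alpha}-1+2\varepsilon)-n\varepsilon/2},
\]
but in fact
\[
\sqrt{L}\,M^{n(\overline{\alpha}-1/2+\varepsilon)}\cdot\sqrt{L}\,M^{n(\overline{\alpha}-1/2+\varepsilon)}=L\,M^{n(2\overline{\alpha}-1+2\varepsilon)};
\]
there is no spare factor $M^{-n\varepsilon/2}$. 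This matters because the allowed budget (when $\widetilde{Y}^V_n$ is near its conditional maximum $LM^{n(2\overline{\alpha}-1+2\varepsilon)}$) is
\[
L\bigl(M^{(n+1)(2\overline{\alpha}-1+2\varepsilon)}-M^{n(2\overline{\alpha}-1+2\varepsilon)}\bigr)=L\,M^{n(2\overline{\alpha}-1+2\varepsilon)}\bigl(M^{2\overline{\alpha}-1+2\varepsilon}-1\bigr),
\]
and the constant $M^{2\overline{\alpha}-1+2\varepsilon}-1$ can be smaller than $\sum_j j^{-2}\approx 1.64$ (for example $M=2$ and $\overline{\alpha}$ near $1/2$). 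With your $\sqrt{L}$-scaled $\kappa_\ell$, the summed fluctuation is of order $LM^{n(2\overline{\alpha}-1+2\varepsilon)}$ and therefore does not in general fit in the budget; the induction does not close.

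The fix is minor: drop the $\sqrt{L}$ and take $\kappa_\ell=c\,M^{n(\overline{\alpha}-1/2+\varepsilon)}(\ell-n+2)^{-2}$ for a small absolute constant $c$. Then $\sum_\ell\kappa_\ell\sqrt{\widetilde{Y}^V_n}\le C\sqrt{L}\,M^{n(2\overline{\alpha}-1+2\varepsilon)}$, which for $L$ large beats $L(M^{2\overline{\alpha}-1+2\varepsilon}-1)M^{n(2\overline{\alpha}-1+2\varepsilon)}$, while the Hoeffding--Janson exponent $\kappa_\ell^2\beta_n^2 M^{\ell-1}/\beta_{n+1}^4\gtrsim M^{n(2\varepsilon-3\tau/2)}M^{\ell-n}(\ell-n+2)^{-4}\gtrsim M^{n\varepsilon/2}(\ell-n+2)$ still diverges for $\tau<\varepsilon$. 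This is, up to normalization, what the paper does. (Note that the paper's displayed $\kappa_\ell=\tfrac12 M^{n(1-2\overline{\alpha}+2\varepsilon)/2}(\ell-n+2)^{-2}$ appears to contain a sign typo and should read $M^{n(2\overline{\alpha}-1+2\varepsilon)/2}$; your choice has the correct sign but the superfluous $\sqrt{L}$.)
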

\begin{proof}
Letting
\[
\kappa_\ell= \frac{M^{n(1-2\overline{\alpha}+2\varepsilon)/2}}{2(\ell-n+2)^2},
\]
and applying \eqref{eq:Hoefdinng_kappa_l} for each $\ell$, a calculation analogous to \eqref{eq:corollary_of_Hoeffding} yields
\[
\PP\left(\widetilde{Y}^V_{n+1}-\widetilde{Y}^V_n>\sqrt{2}\beta_{n+1}^2 M^{-n}+\sqrt{M^{n(2\overline{\alpha}-1+2\varepsilon)}\widetilde{Y}^V_n}\right)\le C\exp(-c M^{n\varepsilon})\,.
\]
Note that because we are back in the two-dimensional situation, there is a dependency graph of bounded degree, and the martingale condition $\EE(X_Q|A_n)=0$ fails at a single cube.

Let $\mathcal{R}_n$ consist of the lines in $\mathcal{R}$ corresponding to the parameter values $u''=k M^{-2n}$,  $k\in\N$, $k\le 3 M^{2n}$. Then $\#\mathcal{R}_n\le 9 M^{2n}$ and thus
\begin{align*}
&\PP\left(\widetilde{Y}^V_{n+1}-\widetilde{Y}^V_n>\sqrt{2}\beta_{n+1}^2 M^{-n}+\sqrt{M^{n(2\overline{\alpha}-1+2\varepsilon)}\widetilde{Y}^V_n}\text{ for some }V\in\mathcal{R}_{n+1}\right)\\
&\le  C M^{2n}\exp(-c M^{n\varepsilon})\le C\exp(-c M^{\varepsilon n/2})\,.
\end{align*}
Given $V\in\mathcal{R}$ corresponding to a parameter $u''$, we can pick another line of the same type $V_0\in\mathcal{R}_{n+1}$ corresponding to $u''_0$ with $|u''-u''_0|<M^{-2(n+1)}$. The Lipschitz bound \eqref{eq:aprioriHolder} holds also (with a different constant) for the lines $x+2y=u''$, $2x+y=u''$, so we can estimate
\[
|\widetilde{Y}^V_{n+1}-\widetilde{Y}^V_n| \le | \widetilde{Y}^{V_0}_{n+1}-\widetilde{Y}^{V_0}_n | + C\beta_{n+1}^2M^{n+1}M^{-2(n+1)}.
\]
We can deduce that
\begin{equation}
\PP\left(\widetilde{Y}^V_{n+1}-\widetilde{Y}^V_n>Z_V\text{ for some }V\in\mathcal{R}\right)\le C\exp(-c M^{n\varepsilon/2})\,\label{eq:joku},
\end{equation}
where
\[
Z_V = \sqrt{2}\beta_{n+1}^2 M^{-n}+C\beta_{n+1}^2M^{n+1}M^{-2(n+1)}+\sqrt{M^{n(2\overline{\alpha}-1+2\varepsilon)}\overline{Y}^V_n}\,.
\]
If $\widetilde{X}_n\le L M^{n(2\overline{\alpha}-1+2\varepsilon)}$ (and $L$ is large enough), then
\begin{align*}
Z_V &\le C M^{n(2\overline{\alpha}-1+\varepsilon)}+\sqrt{L}M^{n(2\overline{\alpha}-1+2\varepsilon)}\\
&\le L\left( M^{(n+1)(2\overline{\alpha}-1+2\varepsilon)}-M^{n(2\overline{\alpha}-1+2\varepsilon)}\right)\,,
\end{align*}
using that $\beta_{n+1}^2\le C M^{n(2\overline{\alpha}+\varepsilon)}$. Combining this with \eqref{eq:joku} yields \eqref{eq:to_bound_the_dep_degree}.
\end{proof}

\begin{lemma} \label{lem:dependency-degree-bound}
Conditioned on $A_n$, there is a dependency graph for $(X_Q:Q\in\Q_{u,n})$ with degree at most $C\widetilde{X}_n\beta_{n}^{-2}M^{n}$, where $C$ is independent of $n, A_n$ and $u$.

Moreover, $\#\Q'_{u,n}\le C\widetilde{X}_n\beta_{n}^{-2}M^{n}$.
\end{lemma}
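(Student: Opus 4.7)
The plan is to unify both claims through a single geometric counting lemma: the number of pairs of $M$-adic cubes in $A_n\times A_n$ meeting a line in $\mathcal{R}$ is at most $C\beta_n^{-2}M^n\widetilde{X}_n$. For the dependency graph, I would declare $Q=I\times J\times K$ and $Q'=I'\times J'\times K'$ in $\mathcal{Q}_{u,n}$ to be adjacent whenever the multisets $\{I,J,K\}$ and $\{I',J',K'\}$ share at least one element of $\mathcal{Q}_n$. Condition \eqref{SI}, after conditioning on $A_n$, implies that $X_Q$ is a measurable function of $\{\mu_{n+1}(L')\colon L'\in\mathcal{Q}_{n+1},\ L'\subset I\cup J\cup K\}$; if two triples are disjoint, the associated sets of offspring in $\mathcal{Q}_{n+1}$ are disjoint as well, so the corresponding $X_Q$'s are conditionally independent. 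Bounding the degree therefore reduces to bounding, for each choice of $L\in\{I,J,K\}$ and each coordinate position $p\in\{1,2,3\}$, the number of $Q'\in\mathcal{Q}_{u,n}$ carrying $L$ in position $p$, and multiplying by at most $9$.

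To carry out that count I would fix $L$ in, say, position $1$, so that $Q'=L\times J'\times K'$ with $J',K'\in\mathcal{Q}_n$ and $J'\times K'\subset A_n\times A_n$. The condition $Q'\cap\Pi^{-1}(u)\neq\varnothing$ is equivalent to $J'\times K'$ meeting the strip $S=\{(y,z)\colon y+z\in u-L\}$, of width $\sqrt{2}M^{-n}$. Every such cube $J'\times K'$ sits inside the thickened strip $\widetilde{S}=\{(y,z)\colon y+z\in u-L+(-2M^{-n},2M^{-n})\}$, and by the coarea formula
\[
\mathcal{H}^2(\widetilde{S}\cap A_n^2)=\beta_n^{-2}\int_{\widetilde{S}}\mu_n\times\mu_n\,d\mathcal{H}^2=\frac{\beta_n^{-2}}{\sqrt{2}}\int_{w}\widetilde{Y}_n^{V_w}\,dw\le C\beta_n^{-2}M^{-n}\widetilde{X}_n,
\]
where each slice line $V_w\in\mathcal{R}$ has the form $y+z=w$ and $w$ varies in an interval of length $\le 5M^{-n}$. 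Since the cubes $J'\times K'$ contribute disjoint squares of area $M^{-2n}$ to this union, the number of them is at most $CM^n\beta_n^{-2}\widetilde{X}_n$, and summing over the nine choices of $(L,p)$ gives the claimed degree bound.

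For the second assertion, any semidiagonal cube in $\mathcal{Q}'_{u,n}$ meeting $\{x=y\}$ must have the form $Q=I\times I\times K$ with $I,K\subset A_n$, since distinct elements of $\mathcal{Q}_n$ are disjoint. Writing out $Q\cap\Pi^{-1}(u)\neq\varnothing$ then gives $\exists x\in I,\ z\in K$ with $2x+z=u$; that is, $I\times K$ meets the line $\{2x+z=u\}\in\mathcal{R}$. The same strip argument as above (with $\widetilde{S}$ now a thickening of this line) bounds the number of such pairs by $C\beta_n^{-2}M^n\widetilde{X}_n$; the cases $\{x=z\}$ and $\{y=z\}$ are handled identically via the lines $2x+y=u$ and $x+2y=u$ in $\mathcal{R}$. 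The only conceptual point, and the reason $\mathcal{R}$ was built out of lines of the three forms $x+y=u''$, $2x+y=u''$, and $x+2y=u''$, is precisely that every slice of $\Pi^{-1}(u)$ by a coordinate hyperplane $\{x_i=\text{const}\}$ or a semidiagonal plane produces a line of one of these three types; once this matching is noted, the only remaining work is the routine strip counting carried out above, so I do not expect a serious obstacle.
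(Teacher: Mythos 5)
Your proof is correct and follows essentially the same strategy as the paper: the same dependency graph (edge iff the coordinate-interval triples share an element), the reduction to counting cubes in $\Q_{u,n}$ with a prescribed interval in a prescribed slot, and a coarea/strip-counting estimate against $\widetilde{X}_n$. The only (favorable) difference is cosmetic: the paper computes $\leb^3\bigl(\bigcup Q^i\bigr)$ and slices twice in $\R^3$, whereas you strip off the fixed factor $L$ and slice once in $\R^2$; you also make explicit the observation that $I\times I\times K$ meeting $\Pi^{-1}(u)$ is equivalent to $I\times K$ meeting $\{2x+z=u\}$, which the paper leaves implicit in the phrase ``a similar application of the coarea formula.''
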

\begin{proof}
We define a graph $\mathcal{G}_{u,n}$ with vertex set $\Q_{u,n}$ as follows: let $Q_i=I_i\times J_i\times K_i$, $i=1,2$, with $I_i,J_i,K_i\in \Q_n$. Then there is an edge between $Q_1$ and $Q_2$ if and only if $\{ I_1,J_1,K_1\}\cap \{ I_2,J_2,K_2\}\neq\varnothing$. It is immediate from \eqref{SI} that this is indeed a dependency graph, so our task is to bound its degree.

Let $Q=I_1\times I_2\times I_3\in\Q_{u,n}$, with $I_i\in\Q_n$. If there is an edge from $Q$ to $Q'$ in $\mathcal{G}_{u,n}$, there exist $1\le i,j\le 3$ such that the $i$-th coordinate projection of $Q'$ is $I_j$. Hence, it is enough to show the following: if $Q^1,\ldots, Q^K\in\mathcal{Q}_{u,n}$ have a joint coordinate projection, then $K\le C \beta_n^{-2}M^{n}\widetilde{X}_n$. Without loss of generality, we may assume that each $Q_i$ is of the form $\widetilde{Q}^i\times I$, where $I=[a,a+M^{-n})\in\mathcal{Q}_n$ is fixed and $\widetilde{Q}^i\in\mathcal{Q}^2_n$ depends on $i$.

\begin{figure}

   \centering

\resizebox{0.7\textwidth}{!}{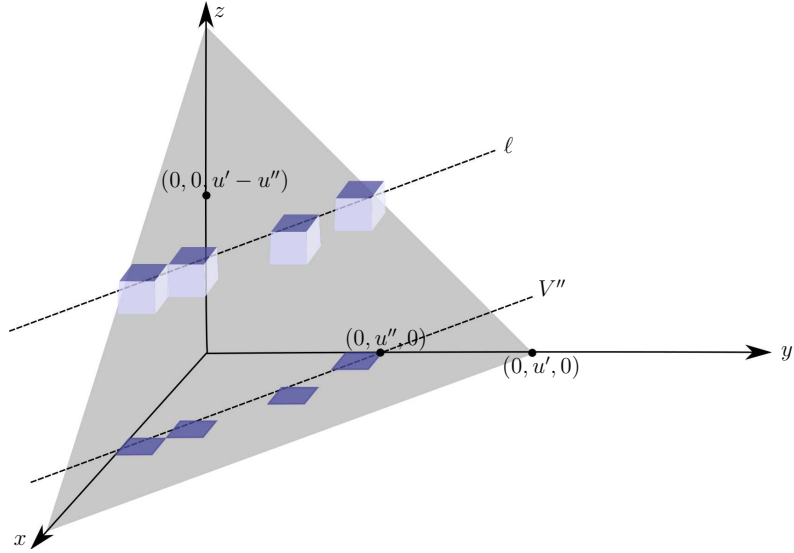}

\caption{Illustration for the application of the coarea formula. The picture shows the plane $\Pi^{-1}(u')$, the line $\ell=\{z=u'-u'', x+y=u''\}$ and its projection $V''$ onto the $(x,y)$-plane, along with the cubes $Q^1,\ldots,Q^K$ and their projections.}

\label{fig:coarea}

\end{figure}

Applying the linear coarea formula (see \cite[Lemma 1 in Section 3.4]{EvansGariepy92} and Figure \ref{fig:coarea} for an illustration), and recalling the definition of $\widetilde{Y}^{V}_n$, we estimate
\begin{align*}
K M^{-3n}&=\leb^3\left(\bigcup_{i=1}^K Q^i\right)=C\int\limits_{u'=u-3M^{-n}}^{u+3M^{-n}}\mathcal{H}^2\left(\bigcup_{i=1}^K Q^i\cap\{x+y+z=u'\}\right)\,du'\\
&=C\int\limits_{u-3M^{-n}}^{u+3M^{-n}}\int\limits_{u'-a-M^{-n}}^{u'-a}\mathcal{H}^1\left(\bigcup_{i=1}^K Q^i\cap\{z+u''=u'\,,\,x+y=u''\}\right)\,du''\,du'\\
&\le C\beta_n^{-2}\int\limits_{u'=u-3M^{-n}}^{u+3M^{-n}}\int\limits_{u''=u'-a-M^{-n}}^{u'-a}\widetilde{Y}^{V''}_n\,du''\,du'\le C\beta_n^{-2} M^{-2n}\widetilde{X}_n\,,
\end{align*}
where $V''\subset\R^2$ is the line $x+y=u''$. Hence, \eqref{eq:dep_degree} follows.

Note that here we have used the bound $\widetilde{Y}^V_n\le \widetilde{X}_n$ only for lines $V\subset\R^2$ of the form $x+y=u''$. Since the same estimate holds also for the lines $x+2y=u''$, $2x+y=u''$, a similar application of the coarea formula implies that each $\Pi^{-1}(u)$ intersects at most $C\beta_n^{-2} M^{n}\widetilde{X}_n$ cubes in $\mathcal{Q}_{u,n}$ intersecting one of the semidiagonals $\{x=y\}$, $\{x=z\}$ or $\{y=z\}$, giving the second claim.
\end{proof}

\section{Fourier decay, and restriction}
\label{sec:fourierdecay}

\subsection{Fourier decay and Salem sets and measures}

Let $\mu$ be a compactly supported probability measure on $\R^d$. The \emph{Fourier transform} of $\mu$ is defined as
\[
\widehat{\mu}(\xi) = \int e^{-2\pi i \,x\cdot \xi} \,d\mu(x)\,,\quad\xi\in\R^d\,.
\]
The speed of decay of $\widehat{\mu}(\xi)$ as $|\xi|\to\infty$ (if any) gives important information about $\mu$. Very roughly speaking, slow or no decay indicates that $\mu$ has ``arithmetic structure'' while fast decay indicates ``pseudo-randomness''. In more quantitative terms, a first question is whether there is any power decay
\begin{equation} \label{eq:power-fourier-decay}
|\widehat{\mu}(\xi)| \le C_\sigma (1+|\xi|)^{-\sigma/2}
\end{equation}
for some $\sigma>0$, and if so, what is the largest such $\sigma$. The reason for looking at $\sigma/2$ (rather than $\sigma$) comes from the following universal upper bound: if the $t$-energy
\[
I_t(\mu) = \iint \frac{d\mu(x)d\mu(y)}{|x-y|^t}=\infty,
\]
then \eqref{eq:power-fourier-decay} cannot hold for any $t<\sigma<d$. This is due to the expression of the energy in terms of the Fourier transform:
\begin{equation} \label{eq:energy-fourier}
I_t(\mu) = C_{t,d} \int |\xi|^{t-d} |\widehat{\mu}(\xi)|^2\,dx.
\end{equation}
See \cite[Lemma 12.12]{Mattila95} for the proof of this identity. In particular, if the topological support of $\mu$ has dimension $t$, then \eqref{eq:power-fourier-decay} cannot hold for any $\sigma>t$. These observations lead to the following definition: the \emph{Fourier dimension} of a measure $\mu$ on $\R^d$ is
\[
\dim_F(\mu) = \sup\left\{\sigma: |\widehat{\mu}(\xi)| \le C_\sigma (1+|\xi|)^{-\sigma/2} \right\},
\]
and the Fourier dimension of a set $A\subset\R^d$ is
\[
\dim_F(A) = \sup\{ \dim_F(\mu): \mu(A)=1\},
\]
where the supremum is over Borel probability measures. See \cite{EPS15} for a discussion of the properties of Fourier dimension and some variants. Our previous discussion shows that one always has an inequality $\dim_F(A)\le \dim_H(A)$. Sets for which $\dim_F(A)=\dim_H(A)$ are called \emph{Salem sets}, as it was Salem \cite{Salem51} who first constructed examples of such sets. Salem sets abound as random sets (see e.g. \cite{LabaPramanik09} and references there), but few deterministic fractal Salem sets are known (curved manifolds such as the sphere are Salem sets - this is proved with standard stationary phase methods).

Salem sets, therefore, should be thought of as pseudo-random in some sense. The next well-known lemma gives a concrete manifestation of this. Recall that the arithmetic sum of two sets $A,B\subset\R^d$ is $A+B=\{x+y:x\in A,y\in B\}$.
\begin{lemma} \label{lem:Salem-sumset}
Let $A\subset\R^d$ be a Salem set. Then for any Borel set $B\subset\R^d$,
\[
\dim_H(A+B) = \min(\dim_H(A)+\dim_H(B),d).
\]
Moreover, if $\dim_H(A)+\dim_H(B)>d$, then $A+B$ has positive Lebesgue measure.
\end{lemma}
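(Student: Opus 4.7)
The plan is to obtain both assertions from a single Fourier-analytic analysis of the convolution $\mu*\nu$, where $\mu$ is a probability measure carried by $A$ with quantitative Fourier decay (supplied by the Salem hypothesis) and $\nu$ is a Frostman measure carried by $B$. The upper bound $\dim_H(A+B)\le\min(\dim_H A+\dim_H B,d)$ does not use the Salem property and is not the substantive content: writing $A+B$ as the image of $A\times B$ under the $\sqrt 2$-Lipschitz map $(x,y)\mapsto\tfrac{1}{\sqrt 2}(x+y)$ and combining with the standard product-set dimension bound handles it after passing to compact subsets of $A$ and $B$ realising the Hausdorff dimensions. The lower bound and the positive-Lebesgue-measure claim, in contrast, rest on the Fourier--energy identity \eqref{eq:energy-fourier}, which together with $\widehat{\mu*\nu}=\widehat{\mu}\,\widehat{\nu}$ translates decay of $\widehat{\mu}$ into improved finite-energy or $L^2$ estimates for $\mu*\nu$.

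First I would fix $\sigma<\dim_H A=\dim_F A$ and $s_B<\dim_H B$. Use the definition of Fourier dimension together with inner regularity to choose a compactly supported probability measure $\mu$ with $\supp(\mu)\subset A$ and $|\widehat{\mu}(\xi)|\le C(1+|\xi|)^{-\sigma/2}$, and use Frostman's lemma to produce a compactly supported probability measure $\nu$ with $\supp(\nu)\subset B$ and $I_{s_B}(\nu)<\infty$. Then $\supp(\mu*\nu)\subset\supp(\mu)+\supp(\nu)\subset A+B$. When $\dim_H A+\dim_H B\le d$, plug the Fourier decay of $\widehat{\mu}$ into \eqref{eq:energy-fourier} to obtain
\[
I_t(\mu*\nu)=C_{t,d}\int|\xi|^{t-d}|\widehat{\mu}(\xi)|^2|\widehat{\nu}(\xi)|^2\,d\xi\le C\int|\xi|^{(t-\sigma)-d}|\widehat{\nu}(\xi)|^2\,d\xi=C'\,I_{t-\sigma}(\nu),
\]
which is finite for any $t<\sigma+s_B$. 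Thus $\dim_H(A+B)\ge\dim_H\supp(\mu*\nu)\ge t$, and sending $\sigma\to\dim_H A$ and $s_B\to\dim_H B$ yields the desired lower bound.

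In the supercritical range $\dim_H A+\dim_H B>d$, choose the parameters so that $\sigma>d-s_B$. The same computation, now applied to $\|\widehat{\mu*\nu}\|_{L^2}^2$ directly (the ``$t=d$'' analogue), gives
\[
\|\widehat{\mu*\nu}\|_{L^2}^2=\int|\widehat{\mu}|^2|\widehat{\nu}|^2\,d\xi\le C\int(1+|\xi|)^{-\sigma}|\widehat{\nu}|^2\,d\xi\le C'\,I_{d-\sigma}(\nu)<\infty,
\]
so $\mu*\nu\in L^2(\R^d)$ has an absolutely continuous density; its support is a closed subset of $A+B$ of positive Lebesgue measure, which in particular forces $\dim_H(A+B)=d$. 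The main technical obstacle is hiding in the opening step of the second paragraph: the definition of $\dim_F(A)$ only requires $\mu(A)=1$, so a priori $\supp(\mu)\subset\overline{A}$ rather than $\subset A$. To ensure that $\supp(\mu*\nu)$ lands inside $A+B$ itself (and not just $\overline{A+B}$), one localises $\mu$ via a smooth bump on a compact $A_0\subset A$ of positive $\mu$-measure, at the cost of an infinitesimal loss in $\sigma$ that is absorbed in the limits $\sigma\to\dim_H A$, $s_B\to\dim_H B$; the analogous remark applies to $\nu$. No genuinely new idea is required beyond this bookkeeping.
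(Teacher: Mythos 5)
Your core argument is exactly the paper's: feed the Fourier decay of a measure on $A$ (from the Salem hypothesis) and the Frostman energy bound of a measure on $B$ into the energy--Fourier identity \eqref{eq:energy-fourier}, conclude finite $(t+t')$-energy of $\mu*\nu$ in the subcritical range and an $L^2$ density in the supercritical range, then let $t\uparrow\dim_H A$, $t'\uparrow\dim_H B$. So this is not a different route. The one place you deviate --- the ``main technical obstacle'' you flag and propose to resolve by a smooth-bump localisation --- is, I think, a phantom problem, and the fix you sketch does not actually do what you want. The lower bound on $\dim_H(A+B)$ does not require $\supp(\mu*\nu)\subset A+B$ in the topological sense; the energy-to-dimension implication is a statement about measures that charge a set, and here one simply observes that $\mu(A)=\nu(B)=1$ forces $\mu*\nu(\R^d\setminus(A+B))=0$, which is all that is needed (this is precisely what the paper records). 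If one insists on compact topological support inside $A+B$ in order to quote the cleanest form of Frostman's theorem, inner regularity of $\mu*\nu$ gives a compact $K\subset A+B$ of positive $\mu*\nu$-measure, and the restriction $(\mu*\nu)|_K$ still has finite $(t+t')$-energy --- no smoothing is required. By contrast, multiplying $\mu$ by a smooth bump $\phi$ supported near a compact $A_0\subset A$ preserves the polynomial Fourier decay, but the support of $\phi\mu$ is contained in $\supp\phi$, an open neighbourhood of $A_0$ that will typically contain points of $\overline A\setminus A$; so this does not deliver $\supp(\phi\mu)\subset A$ unless $A$ is already closed, in which case there was nothing to fix. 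In short: the substance of your proof is correct and matches the paper's, but the final ``bookkeeping'' paragraph is unnecessary and, as written, not quite sound; drop it and invoke the measure-theoretic form of the energy bound instead.
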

\begin{proof}
Pick $t<\dim_H(A), t'<\dim_H(B)$. By the definition of Salem set, there is a Borel probability measure $\mu$ such that $\mu(A)=1$ and $\dim_F(\mu)>t$. By Frostman's Lemma (see e.g. \cite[Theorem 8.8]{Mattila95}), we can also find a Borel probability measure $\nu$ with $\nu(B)=1$, such that $I_{t'}(\mu)<+\infty$. Note that $\mu*\nu(\R^d\setminus(A+B))=0$. Using the expression for the energy in terms of the Fourier transform, Equation \eqref{eq:energy-fourier}, we obtain
\begin{align*}
I_{t+t'}(\mu*\nu)  &= C_{t,t',d} \int |\xi|^{t+t'-d} |\widehat{\mu*\nu}(\xi)|^2 \,d\xi\\
&=  C_{t,t',d} \int |\xi|^{t+t'-d} |\widehat{\mu}(\xi)|^2 |\widehat{\nu}(\xi)|^2 \,d\xi\\
&\le C_{t,t',d} C'_{t} \int |\xi|^{t+t'-d}  |\xi|^{-t} |\widehat{\nu}(\xi)|^2 \,d\xi\\
&\le C_{t,t',d} C'_{t} I_{t'}(\nu) <+\infty.
\end{align*}
If $t+t'<d$, the finiteness of the energy implies that $\dim_H(A+B)\ge t+t'$ (see e.g. \cite[Theorem 8.7]{Mattila95}), while if $t+t'\ge d$, then the above calculation together with Parseval's Theorem show that $\mu*\nu$ has a density in $L^2$, so $A+B$ has positive Lebesgue measure. Letting $t\uparrow \dim_H(A), t'\uparrow\dim_H(B)$ finishes the proof.
\end{proof}

\subsection{Fourier decay of the random measures}

There is no universally agreed definition of \emph{Salem measures} (partly because there are many notions of dimension one could use). However, it is clear from the previous discussion that if a measure satisfies $|\widehat{\mu}(\xi)|\le C_\sigma |\xi|^{\sigma/2}$, then $\sigma\le\dim_H(\supp(\mu))$. We will see that for the class of random measures studied in this article, this holds for any $\sigma<d-\overline{\alpha}$ when $d=1$ or $2$. In particular, this implies that the random sets $A=\supp(\mu)$ are Salem sets provided that $\underline{\alpha}=\overline{\alpha}=\alpha$ (so that $\dim_H A=d-\alpha$ a.s., recall Lemma \ref{lem:asdim}).

\begin{thm} \label{thm:Salem}
Suppose $(\mu_n)$ satisfies \eqref{mu0}--\eqref{SI} and $\overline{\alpha}\in[d-2,d]$, where $\overline{\alpha}$ is as in \eqref{eq:def-alpha}. Then almost surely the following holds for the limit measure $\mu$: for any $\sigma<d-\overline{\alpha}$, there is a constant $C=C_\sigma>0$ such that
\[
|\widehat{\mu}(\xi)| \le C\, |\xi|^{-\sigma/2}\quad\text{for all }\xi\neq 0.
\]
\end{thm}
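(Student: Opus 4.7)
The approach is the classical martingale-plus-Hoeffding scheme: write $\widehat{\mu}(\xi)$ as a telescoping sum of independent increments, bound each using a combination of trivial and oscillatory estimates, and pass from a countable net of frequencies to all of $\R^d$ via Lipschitz continuity.

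For fixed $\xi\neq 0$, by \eqref{martingale_condition} the sequence $\widehat{\mu_n}(\xi)$ is a complex martingale in $n$, converging a.s.\ to $\widehat{\mu}(\xi)$ (since $\|\mu_n\|$ is a.s.\ bounded by Lemma~\ref{lem:asdim}). Decomposing over $M$-adic cubes,
\[
\widehat{\mu}(\xi) \;=\; 1 + \sum_{n=0}^{\infty} D_n(\xi), \qquad D_n(\xi) \;=\; \sum_{Q\in\Q_n^d,\,Q\subset A_n} X_Q(\xi),
\]
\[
X_Q(\xi) \;=\; \int_Q e^{-2\pi i\xi\cdot x}\,\bigl(d\mu_{n+1}(x)-d\mu_n(x)\bigr).
\]
By \eqref{martingale_condition} and \eqref{SI}, conditionally on $A_n$ the variables $\{X_Q(\xi)\}_Q$ are independent with zero mean.

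Two bounds on $|X_Q(\xi)|$ are available: the trivial one $|X_Q(\xi)|\le C\beta_nM^{-nd}$; and, when $|\xi|\ge M^n$, the oscillatory bound $|X_Q(\xi)|\le C\beta_n M^{-n(d-1)}/|\xi|$, obtained by expanding $X_Q$ through the $M^d$ offspring $Q'$ of $Q$ and using $|\widehat{\mathbf{1}_Q}(\xi)|\le C M^{-n(d-1)}/|\xi|$ (decay of one sinc factor in the largest coordinate of $\xi$). Calling $R_n$ the smaller of the two bounds, and using $N_n\le C M^{nd}/\beta_n$ from Lemma~\ref{lem:asdim}, Hoeffding's inequality applied to the real and imaginary parts of $D_n(\xi)$ yields
\[
\PP\bigl(|D_n(\xi)|>t\bigr)\;\le\;C\exp\!\left(-\frac{c\,t^2}{N_nR_n^2}\right),\quad
N_nR_n^2\le C_\varepsilon\begin{cases} M^{n(\overline{\alpha}+2-d+\varepsilon)}\,|\xi|^{-2}, & M^n\le |\xi|,\\[2pt] M^{-n(d-\overline{\alpha}-\varepsilon)}, & M^n\ge |\xi|,\end{cases}
\]
for every $\varepsilon>0$. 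The hypothesis $\overline{\alpha}\in[d-2,d]$ is exactly what makes the exponent $\overline{\alpha}+2-d$ non-negative, so that the first branch is non-decreasing in $n$ and is comparable to the second one at $n\approx\log_M|\xi|$, both of order $|\xi|^{\overline{\alpha}-d+\varepsilon}$.

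Fix $\sigma<d-\overline{\alpha}$ and pick $\varepsilon>0$ with $\sigma<d-\overline{\alpha}-\varepsilon$. I would then set $t_n=K\sqrt{N_nR_n^2\,\log(2+|\xi|)}$ with $K$ large. Summing, the bounds on $N_nR_n^2$ give $\sum_n t_n\le C|\xi|^{-(d-\overline{\alpha}-\varepsilon)/2}\sqrt{\log|\xi|}\le C_\sigma|\xi|^{-\sigma/2}$. On each dyadic annulus $\{|\xi|\sim 2^L\}$, taking a net $\Xi_L$ of spacing $2^{-pL}$ and polynomial cardinality ($p$ large), the union bound over $\xi\in\Xi_L$ and $n\ge 0$ yields a failure probability of the form $e^{-cK^2 L}$, summable in $L$; by Borel--Cantelli, a.s.\ for all sufficiently large $L$ the pointwise bound $|D_n(\xi)|\le t_n$ holds simultaneously on $\Xi_L$ for all $n$. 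The Lipschitz estimate $|\widehat{\mu}(\xi)-\widehat{\mu}(\xi')|\le 2\pi\|\mu\|\,|\xi-\xi'|$ then extends it to every $\xi$, completing the proof.

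The main delicate point is the refined oscillatory bound on $|X_Q(\xi)|$: because $\xi$ may be nearly parallel to a coordinate axis, one can uniformly exploit only a single direction of oscillation, costing a factor $M^n$ rather than $M^{nd}$. It is precisely this single-direction gain that forces $\overline{\alpha}\ge d-2$; without it, $N_nR_n^2$ would grow too fast in $n$ before $n$ reaches $\log_M|\xi|$, and the summed thresholds $\sum_n t_n$ would no longer decay in $|\xi|$.
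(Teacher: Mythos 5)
Your proposal is in the right spirit and captures the two ideas that also drive the paper's proof: (a) decompose $\widehat\mu$ into conditionally independent martingale increments and apply Hoeffding--Janson given $A_n$, and (b) exploit an oscillation gain of order $\min(1,M^n/|\xi|)$ coming from a \emph{single} coordinate direction of $\xi$ --- which is exactly why the hypothesis $\overline{\alpha}\ge d-2$ enters. The routes to that gain, however, are genuinely different. The paper first reduces to integer frequencies via Wolff's Lemma~9A4, runs Hoeffding--Janson with the \emph{trivial} bound $|X_Q|\le C\beta_{n+1}M^{-nd}$ only over the polynomially many $|k|<M^{n+1}$, and then transfers the estimate to all integers $k$ through the exact scaling identity $\widehat{\mathbf{1}}_Q(k+M^{n+1}\ell)=\tfrac{k}{k+M^{n+1}\ell}\widehat{\mathbf{1}}_Q(k)$, which is a \emph{deterministic} relation requiring no further probability. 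You instead fold the oscillatory estimate $|\widehat{\mathbf{1}_Q}(\xi)|\lesssim M^{-n(d-1)}/|\xi|$ into the Hoeffding variance and run a union bound over $2^{-pL}$-nets on dyadic annuli, extending by Lipschitz continuity of $\widehat\mu$. Both routes are legitimate; the paper's is cleaner because the high-frequency transfer is free, while yours generalizes more transparently to higher $d$ since there is no factorization subtlety in the scaling relation.

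There is, however, a concrete gap in your union bound over $n$. With $t_n=K\sqrt{N_nR_n^2\log(2+|\xi|)}$, the Hoeffding exponent is $t_n^2/(N_nR_n^2)=K^2\log(2+|\xi|)$, \emph{independent of $n$}, so $\PP(|D_n(\xi)|>t_n)\lesssim(2+|\xi|)^{-cK^2}$ for every $n$, and $\sum_{n\ge 0}\PP(|D_n(\xi)|>t_n)=\infty$. You must build $n$-dependence into the threshold: for instance
\[
t_n=K\sqrt{N_nR_n^2\bigl(\log(2+|\xi|)+n\bigr)}
\]
makes $\sum_n\PP(|D_n(\xi)|>t_n)$ a convergent geometric series, while the extra $\sqrt{\,\cdot+n\,}$ factor only costs a polylogarithmic correction in $\sum_n t_n$ (the tails are still dominated by a geometric decay in $n$ on both sides of $n\approx\log_M|\xi|$), so $\sum_n t_n\lesssim_\sigma|\xi|^{-\sigma/2}$ survives. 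A second, smaller issue: $N_n$ is random and satisfies $N_n=\|\mu_n\|\,M^{nd}\beta_n^{-1}$, not a deterministic bound $N_n\le CM^{nd}\beta_n^{-1}$. The clean fix, as in the paper, is to insert $\|\mu_n\|^{1/2}$ into the threshold so that it cancels against the $\|\mu_n\|$ hidden inside $N_nR_n^2$, leaving a deterministic Hoeffding exponent; the random factor $\sup_n\|\mu_n\|^{1/2}$ (a.s.\ finite by Lemma~\ref{lem:asdim}) is then absorbed into the final random constant $C_\sigma$.
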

Theorem \ref{thm:Salem} was first proved in \cite[Theorem 14.1]{ShmerkinSuomala15} (the class of measures there obeys slightly different assumptions, but the changes required to obtain Theorem \ref{thm:Salem} are very minor). The restriction $\overline{\alpha}\in [d-2,d]$ might appear mysterious, but as observed in \cite[Remarks 14.2]{ShmerkinSuomala15}, it is sharp: the Fourier dimension of $\mu$ can never exceed $2$, due to the $M$-adic structure of the construction, which forces the principal projections to be discontinuous. Of course, in dimensions $d=1,2$ the assumption $\overline{\alpha}\in [0,2]$ is vacuous. We also note that, as a special case, the theorem shows that fractal percolation limit sets are Salem sets, so long as they have dimension at most $2$ (which again is a necessary condition). It also gives many examples of Salem sets which are also Ahlfors-regular.

Once again, we will only present the proof of the theorem in the case of dimension $d=1$. The proof in the general case is very similar, but slightly more technical. The ideas of the proof are inspired by a related construction of {\L}aba and Pramanik \cite{LabaPramanik09}, and there are strong parallels with the proof of Theorem \ref{thm:self-convolutions} in the simpler double-convolution case.

\begin{proof}[Proof of Theorem \ref{thm:Salem} in the case $d=1$]
Fix $\sigma<1-\overline{\alpha}$. It is enough to show that $\widehat{\mu}(k) \le C_\sigma |k|^{-\sigma/2}$ for $k\in\Z$, as this implies decay over real frequencies, see \cite[Lemma 9A4]{Wolff03}.

For fixed $k$, we have
\[
\widehat{\mu}_{n+1}(k)-\widehat{\mu}_n(k)=\sum_{Q\in\Q_n}X_Q\,,
\]
where
\[
X_Q =\int_Q (\mu_{n+1}(x)-\mu_n(x)) \exp(-2\pi i k x) \,dx\,.
\]
Then $|X_Q|\le C \beta_{n+1} M^{-n}$. Let $N_n$ be the number of cubes making up $A_n$. Using Lemma \ref{lem:HoeffdingJanson} (with $\Delta=0$) for the real and imaginary parts of $X_Q$ and property \eqref{Madic}, we obtain
\begin{align*}
\PP\left(|\widehat{\mu}_{n+1}(k)-\widehat{\mu}_n(k)| > M^{-\sigma n/2}\|\mu_n\|^{1/2}\,|\,A_n\right) &\le
C\exp\left(-c M^{(2-\sigma)n}\|\mu_n\|\beta_{n+1}^{-2}N_n^{-1}\right)\\
&\le C \exp(-c M^{(1-\sigma)n}\beta_n\beta_{n+1}^{-2})\,.
\end{align*}
Since $\liminf_n\log_M\beta_{n}/\log_M\beta_{n+1}^{-2}\ge-\overline{\alpha}$ and $\sigma<1-\overline{\alpha}$, we get
\begin{equation} \label{eq:Salem-large-dev}
\PP\left(|\widehat{\mu}_{n+1}(k)-\widehat{\mu}_n(k)| > M^{-\sigma n/2}\|\mu_n\|^{1/2} \text{ for some } |k|< M^{n+1}\right) \le \e_n\,,
\end{equation}
where $\e_n\le C\, M^n \exp(-c M^{(1-\sigma)n}\beta_{n}\beta_{n+1}^{-2})$ is summable.

Let $Q\in\mathcal{Q}_{n+1}$. Since the endpoints of the interval $Q$ are of the form $j M^{-n-1}$, it follows that for $|k|<M^{n+1},0\neq\ell\in\mathbb{Z}$,
\[
\widehat{\mathbf{1}}_Q(k+M^{n+1}\ell) = \frac{k}{k+ M^{n+1}\ell}\, \widehat{\mathbf{1}}_Q(k)\,.
\]
Since $\widehat{\mu}_{n+1}$ is a linear combination of the functions $\widehat{\mathbf{1}}_Q$, the same relation holds between $\widehat{\mu}_{n+1}(k+M^{n+1}\ell)$ and $\widehat{\mu}_{n+1}(k)$ (and this holds also for $\widehat{\mu}_{n}$ in place of $\widehat{\mu}_{n+1}$). Fix $k'$ with $|k'|\ge M^{n+1}$, and write $k'=k+M^{n+1}\ell$, where $|k|<M^{n+1}$, $k\ell\ge 0$, and $\ell\neq 0$. For such $k'$, we have
\begin{align*}
|\widehat{\mu}_{n+1}(k')-\widehat{\mu}_n(k')| &\le \frac{|k|}{|k+ M^{n+1}\ell|}|\widehat{\mu}_{n+1}(k)-\widehat{\mu}_n(k)|\\
&< \frac{M^{n+1}}{|k'|}|\widehat{\mu}_{n+1}(k)-\widehat{\mu}_n(k)|\,.
\end{align*}
Combining this with \eqref{eq:Salem-large-dev}, we arrive at the following key fact: $\PP(E_n)<\e_n$, where $E_n$ is the event
\[
\left|\widehat{\mu}_{n+1}(k)-\widehat{\mu}_n(k)\right|>
\|\mu_n\|^{1/2}\min\left(1,\frac{M^{n+1}}{|k|} \right) M^{-\sigma n/2}\text{ for some }k\in\Z\,.
\]
Since $\e_n$ was summable, and $\|\mu_n\|$ is a.s. bounded, it follows from the Borel-Cantelli Lemma that a.s. there are $C$ and $n_0$ such that
\[
\left|\widehat{\mu}_{n+1}(k)-\widehat{\mu}_n(k)\right|  \le C \min\left(1,\frac{M^{n+1}}{|k|} \right) M^{-\sigma n/2}\quad\text{for all }k\in\Z,n\ge n_0\,.
\]
Thus, choosing $n_1\in\N$ such that $M^{n_1}\le|k|<M^{n_1+1}$, and telescoping, we have
\begin{equation}\label{eq:telescope}
\begin{split}
\left|\widehat{\mu}_m(k)-\widehat{\mu}_{n_0}(k)\right|&\le\sum_{n_0\le n\le n_1}2CM^{n(1-\sigma/2)}|k|^{-1} +\sum_{\max(n_1,n_0)<n\le m}C M^{-n\sigma/2}\\
&\le C\,|k|^{-\sigma/2}
\end{split}
\end{equation}
for all $k\in\mathbb{Z},m\ge n_0$. Noting that $|\widehat{\mu}_{n_0}(k)|\le C_{n_0}|k|^{-1}$ and letting $m\to\infty$ finishes the proof.
\end{proof}

\subsection{Restriction for fractal measures}

Given a measure $\mu$ on $\R^d$ the \emph{restriction problem} for $\mu$ consists in determining for which values of $p,q$ there is an estimate
\begin{equation} \label{eq:restriction}
\|\widehat{fd\mu}\|_{L^p(\mathcal{L}^d)} \le C_{p,q,\mu} \|f\|_{L^q(\mu)}.
\end{equation}
The name comes from the dual formulation of \eqref{eq:restriction}, namely
\[
\|\widehat{f}\|_{L^{q'}(\mu)} \le C'_{p,q,\mu} \|f\|_{L^{p'}(\mathcal{L}^d)}.
\]
In other words, the goal is to understand for which functions $f$ it is meaningful to restrict $\widehat{f}$ to the support of $\mu$. Classically, the restriction problem has been studied for surfaces such as the sphere or the paraboloid. In the case of the sphere and $q=2$, a famous theorem of Stein and Tomas gives the sharp range of $p$ for which \eqref{eq:restriction} holds: $p\ge (2d+2)/(d-1)$. The problem of finding all the pairs $(p,q)$ is still open and is closely connected to other well-known problems such as the Kakeya problem. The study of the restriction problem for fractal measures was initiated by Mockenhaupt \cite{Mockenhaupt00} (see also \cite{Mitsis02}), who extended the Stein-Tomas argument to very general measures satisfying suitable mass and Fourier decay: if a measure $\mu$ on $\R^d$ satisfies
 \begin{align}
\mu(B(x,r))&\le C_1\,r^s\,, \label{eq:decay-mass}\\
|\widehat{\mu}(\xi)| &\le  C_2 \,(1+|\xi|)^{-\sigma/2} \,,\label{eq:decay-FT}
\end{align}
then \eqref{eq:restriction} with $q=2$ holds whenever
\[
p >  p_{s,\sigma,d} = \frac{2(2d-2s+\sigma)}{\sigma}\,.
\]
Bak and Seeger \cite{BakSeeger11} proved that \eqref{eq:restriction} also holds at the endpoint $p=p_{s,\sigma,d}$. (Note that taking $s=\sigma=d-1$, this recovers the Stein-Tomas estimate in the case of the sphere). Hambrook and {\L}aba \cite{HambrookLaba13} (see also \cite{Chen16} for a generalization) constructed, for a dense set of $t\in [0,1]$, measures $\mu$ on the real line satisfying \eqref{eq:decay-mass} and \eqref{eq:decay-FT} for $s,\sigma$ arbitrarily close $t$, and supported on sets of Hausdorff dimension $t$, for which the restriction estimate \eqref{eq:restriction}  does not hold for any $p<p_{t,t,1}$. This shows that in general the result of Mockenhaupt, Bak and Seeger is sharp also for fractal measures. However, they left open the problem of whether one can go beyond this range for \emph{some} fractal measures of this kind. This question was explicitly asked in \cite{Laba14}.

A different general restriction theorem based on convolution powers was proved by X. Chen \cite{Chen14}. As a special case of his main result, he showed that if the $n$-th convolution power $\mu^{*n}$ of a measure $\mu$ on $\R^d$ has a bounded density, then \eqref{eq:restriction} holds whenever $p\ge 2n$ and $q\ge p/(p-n)$ (in particular, for $q=2$). Theorem \ref{thm:self-convolutions} provides a rich class of random measures to which this result applies. Moreover, Theorem \ref{thm:Salem} shows that many of these measures can also be chosen to have essentially optimal Fourier decay, and using our class of examples (ii) it is also possible to get Ahlfors-regular examples. When the dimension of the (support of the) measure lies in $(1/2,2/3)$, the range in the restriction theorem of Chen is $p\ge 4$ when $q= 2$, and this is a larger range than that coming from Mockenhaupt's Theorem, since $p_{t,t,1}=4/t-2>4$ when $t<2/3$. Hence, there is a large class of random measures $\mu$ on the real line supported on sets of any dimension $t\in(1/2,2/3)$ such that:
\begin{itemize}
\item $\mu$ satisfies \eqref{eq:decay-mass} and \eqref{eq:decay-FT} for $s,\sigma$ arbitrarily close to $t$,
\item The restriction estimate \eqref{eq:restriction} holds for $q=2$ and all $p\ge 4>p_{t,t,1}$.
\end{itemize}
This class includes fractal percolation: it can be shown that the fractal percolation measure satisfies \eqref{eq:decay-mass} for all $s$ smaller than the a.s. dimension. This partially answers the question of I. {\L}aba. In fact, by adapting the proof of Theorem \ref{thm:self-convolutions}, it is easy to show the existence of measures supported on sets of dimension exactly $1/2$ for which the above is true. The significance of $1/2$ is that the range $p\ge 4$ (with $q=2$) is sharp in this case, as can be seen from dimensional considerations (see \cite{Chen14}). In general, this method gives measures supported on sets of dimension $1/n$, $n\in\N$, for which the range of exponents in \eqref{eq:restriction} with $q=2$ is sharp. The precise connection between restriction and Hausdorff dimension is still not fully understood.

We remark that Chen and Seeger \cite{ChenSeeger15} constructed random measures with similar properties, but using entirely different methods. Their result works for any ambient space and dimension of the support of the measure (while ours only applies to measures of dimension up to $2$, due to Theorem \ref{thm:Salem}). On the other hand, our construction includes Ahlfors-regular examples, and well-known models such as fractal percolation.

\section{Arithmetic structure}
\label{sec:arithmeticstructure}

A basic problem in additive combinatorics is to understand what properties of a set $A\subset \{1,\ldots,N\}$ imply that $A$ has an arithmetic progression of a given length $k$. If $|A|\ge \delta N$, and $N$ is sufficiently large in terms of $k$, then Szemer\'{e}di's famous theorem asserts that $A$ does contain a progression of length $k$. On the other hand, as we will see below, for every $\e>0$ and large enough $N$, there are sets of size $N^{1-\e}$ that do not even contain progressions of length $3$.

It is natural to ask similar questions for subsets of $[0,1]$. A simple application of the Lebesgue density theorem shows that if $A$ has positive Lebesgue measure, then $A$ contains arithmetic progressions of any length and, more generally, contains an homothetic copy of any finite subset of $\R$. On the other hand, Keleti \cite{Keleti98} constructed a compact set of Hausdorff dimension $1$ that does not contain any rectangles $x,x+r,y,y+r$, and in particular contains no progressions of length $3$. In a different direction,  Davis, Marstrand and Taylor \cite{DMT59} constructed a compact set of zero Hausdorff dimension which contains a similar copy of all finite sets. It then appears that Hausdorff dimension by itself is insufficient to detect the presence, or lack thereof, of finite patterns.

In the discrete setting, it is well-known that Fourier uniformity is enough to guarantee the existence of three-term arithmetic progressions even for rather sparse (although not extremely sparse) sets. It is natural to ask whether a similar result holds for subsets of $[0,1]$. I. {\L}aba and M. Pramanik \cite{LabaPramanik09} proved a result in this direction:
\begin{thm}[{\L}aba and Pramanik] \label{thm:LabaPramanik}
Given $C_1, C_2>0, \sigma>0$, there exists $\e_0=\e_0(C_1,C_2,\sigma)>0$ such that the following holds: if  $s>1-\e_0$, and $\mu$ is a measure on $[0,1]$ such that
\begin{enumerate}
\item[(i)] $\mu(x,x+r) \le C_1 r^s$ for all $x\in \R/\Z$ and all $r\in (0,1)$,
\item[(ii)] $|\wh{\mu}(\xi)| \le C_2 |\xi|^{-\sigma/2}$ for all $\xi\neq 0$,
\end{enumerate}
then $\supp(\mu)$ contains a $3$-term arithmetic progression.
\end{thm}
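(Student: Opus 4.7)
The plan is to verify the presence of a $3$-AP in $\supp(\mu)$ by counting weighted $3$-APs via a trilinear Fourier form and showing the count is strictly positive. Let $\phi\geq 0$ be a smooth even bump with $\int\phi=1$ and $\widehat\phi$ compactly supported, and set $f_\delta=\mu*\phi_\delta$ with $\phi_\delta(x)=\delta^{-1}\phi(x/\delta)$. Fix a smooth non-negative weight $\psi$ supported in $[1/4,3/8]$ with $\int\psi=1$, and consider
$$
\Lambda(f_\delta)=\iint f_\delta(x)\,f_\delta(x+r)\,f_\delta(x+2r)\,\psi(r)\,dx\,dr.
$$
Since $\psi$ avoids $r=0$, a uniform lower bound $\Lambda(f_\delta)\geq c>0$ forces $\supp(f_\delta)$ to contain a $3$-AP with common difference in $[1/4,3/8]$; a Hausdorff-compactness argument as $\delta\to 0$, using $\supp(f_\delta)\subset\supp(\mu)+[-\delta,\delta]$, then yields a $3$-AP inside the closed set $\supp(\mu)$.

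By Plancherel,
$$
\Lambda(f_\delta)=\iint\widehat{f_\delta}(\alpha)\widehat{f_\delta}(\beta)\widehat{f_\delta}(-\alpha-\beta)\widehat\psi(-\alpha-2\beta)\,d\alpha\,d\beta.
$$
Choose a large parameter $M$ and decompose each factor via a smooth Littlewood--Paley cutoff $\widehat{\Phi_M}$ supported in $[-M,M]$ into a low-frequency piece and a high-frequency remainder. Expanding gives one \emph{main} low-low-low term $\Lambda^{\mathrm{low}}(f_\delta)$ and seven \emph{error} terms, each containing at least one high-frequency factor. The main term is non-negative, and as $\delta\to 0$ it converges to the $3$-AP count in $\mu*\Phi_M$, which by (i) is a non-negative function with total mass $\|\mu\|$, $L^\infty$-norm $\lesssim C_1 M^{1-s}$, and support in a small neighborhood of $[0,1]$. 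For $s$ sufficiently close to $1$, the effective support of $\mu*\Phi_M$ occupies most of $[0,1]$, and a perturbative calculation based on $\Lambda(\mathbf 1_{[0,1]})=\int\psi(r)(1-2r)\,dr>0$ gives a lower bound $\Lambda^{\mathrm{low}}(f_\delta)\ge c_1>0$, uniform in small $\delta$.

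In a typical error term, the high-frequency factor contributes a pointwise bound $\|\widehat{f_\delta}^{>M}\|_\infty\leq C_2 M^{-\sigma/2}$ via (ii). The remaining two factors, together with the Schwartz decay of $\widehat\psi$, are controlled by Cauchy--Schwarz and Plancherel. The difficulty is that for $s<1$ the naive bound $\|f_\delta\|_2^{\,2}\lesssim\delta^{s-1}$ blows up as $\delta\to 0$; however, combining (i), through the energy identity $\int|\widehat{\mu}(\xi)|^2|\xi|^{t-1}d\xi\lesssim I_t(\mu)<\infty$ for $t<s$, with the tail estimate of (ii) yields a $\delta$-uniform bound of the form $|\mathrm{error}|\le C_3 M^{-\varepsilon}$ for some $\varepsilon=\varepsilon(s,\sigma)>0$. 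Choosing $M$ sufficiently large and $\varepsilon_0=\varepsilon_0(C_1,C_2,\sigma)$ small enough forces the total error below $c_1/2$, so $\Lambda(f_\delta)\ge c_1/2>0$ uniformly in $\delta$, which completes the argument.

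The main obstacle is making this error bound uniform in $\delta$: neither hypothesis alone suffices, since (i) yields an $L^\infty$-bound on $f_\delta$ that blows up, while (ii) need not even guarantee $\|\widehat{f_\delta}\|_2<\infty$. The resolution, which is the heart of the {\L}aba--Pramanik method, is a restriction-type combination of (i) and (ii) -- closely related to the Mockenhaupt--Bak--Seeger theorem discussed in the previous section -- that compensates the $\delta$-dependent blow-up of $L^2$-norms of $f_\delta$ by the $M^{-\sigma/2}$-decay of each high-frequency factor. Carrying this balance out rigorously is where the smallness threshold $\varepsilon_0$ enters, since the exponents in the underlying restriction-type inequality depend quantitatively on how close $s$ is to $1$.
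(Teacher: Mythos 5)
The paper does not supply its own proof of Theorem~\ref{thm:LabaPramanik}: it is cited from \cite{LabaPramanik09} (with the extension to all $\sigma>0$ from \cite{HLP15}). Your proposal is therefore a reconstruction of the {\L}aba--Pramanik circle-method argument, and the overall strategy you describe -- a trilinear $3$-AP count, a low/high frequency decomposition at scale $M$, a lower bound on the low-low-low term via the Frostman condition, and error bounds combining the Fourier decay with an energy-type $L^2$ estimate -- is indeed the right architecture, and your identification of the restriction-type interplay between (i) and (ii) as the heart of the argument is accurate.

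However, there are two genuine gaps that would have to be repaired before this is a proof. First, the mollifier requirements conflict: if $\widehat{\phi}$ is compactly supported and $\phi\ge 0$ with $\int\phi=1$, then $\phi$ is entire of exponential type and hence has full support, so $\supp(f_\delta)=\R$ and the inclusion $\supp(f_\delta)\subset\supp(\mu)+[-\delta,\delta]$ used in your compactness step fails. You should instead take $\phi$ compactly supported (so that $\widehat{\phi}$ is merely Schwartz), which is harmless for the Plancherel computation and makes the limiting argument correct. Second, and more seriously, the fixed choice of $\psi$ supported in $[1/4,3/8]$ together with the ``perturbative calculation'' for the main term is not sound for general $C_1$. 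A measure satisfying (i) and (ii) can perfectly well be supported in, say, $[0,1/8]$ -- rescaling any Salem-type measure produces such examples, at the cost of changing $C_1,C_2$ by bounded factors -- and then $\Lambda(f_\delta)\equiv 0$ for your $\psi$, contradicting any uniform lower bound. The ``effective support of $\mu*\Phi_M$ occupies most of $[0,1]$'' step is the source of this: it is valid only when the $L^\infty$ bound $C_1M^{1-s}$ is close to $1$, which forces $C_1$ itself to be close to $1$ and cannot be arranged for $s<1$ once $C_1>1$. The actual argument must instead lower-bound the main term by a Varnavides-type estimate (a nonnegative $g$ with $\int g=1$ and $\|g\|_\infty\le D$ dominates $\tfrac{1}{2D}\mathbf 1_A$ for some $A$ with $|A|\ge(2D)^{-1}$, and one then uses the quantitative density version of Roth's theorem on $A$), and must take $\psi$ supported on $[\tau,1)$ for a $\tau=\tau(C_1,M,s)$ that depends on the constants, with a separate estimate showing the contribution from $|r|<\tau$ to the low-frequency count is $O(\tau D^2)$ and can be absorbed. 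This is precisely where the advertised dependence of $\e_0$ on $C_1$ enters, and your sketch does not account for it. Your description of the error terms via the energy identity and the pointwise bound $C_2M^{-\sigma/2}$ is correct in outline, but the exponents must then be chased together with the $\tau$-dependence from $\widehat{\psi}$ to close the argument.
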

In fact, the original result from \cite{LabaPramanik09} requires $\sigma>2/3$ in the Fourier decay assumption, while the relaxation to any $\sigma>0$ is due to \cite[Theorem 10.1]{HLP15}, where generalizations to certain polynomial patterns and higher dimensions are also obtained. We emphasize that the mass decay must be fast enough not just in terms of the Fourier decay, but also in terms of the constants $C_1, C_2$. This makes the assumptions hard to verify in practice. This then leads to the question of whether the mass and Fourier decay conditions (i), (ii) with sufficiently large $s,\sigma<1$ (independent of the constants) are enough to guarantee the presence of arithmetic progressions in the support of $\mu$. Results such as Lemma \ref{lem:Salem-sumset} show that, for certain problems, it is only the polynomial exponent of decay that matters, and not the constant. This suggests the question of whether the dependence of $\e_0$ on the constants (and particularly on $C_1$) is really needed in Theorem \ref{thm:LabaPramanik}. Recently, the first author \cite{Shmerkin16} used a random construction closely related to the class studied in this note to show that the answer is yes, in a strong sense:
\begin{thm} \label{thm:Salem-no-progressions}
For every $s\in (0,1)$ there exists a Borel probability measure $\mu$ on $[0,1]$ such that:
\item[(i)] $\mu(x,x+r) \le C r^s$ for all $x\in [0,1]$ and all $r\in (0,1)$, and some $C>0$ that depends on $s$.
\item[(ii)] For all $\sigma\in(0,s)$, there exists a constant $C_\sigma>0$ such that $|\wh{\mu}(\xi)| \le C_\sigma |\xi|^{-\sigma/2}$ for all $\xi\neq 0$,
\item[(iii)] The topological support of $\mu$ does not contain any arithmetic progressions of length $3$.
\end{thm}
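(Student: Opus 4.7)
The plan is to build $\mu$ as a modification of the Ahlfors-regular random measure of example (ii) with $\alpha=1-s$, so that the support is forced to avoid $3$-term arithmetic progressions while still satisfying \eqref{mu0}--\eqref{SI}. From those hypotheses with this $\alpha$, property (i) holds deterministically (as noted after example (ii)) and property (ii) follows at once from Theorem \ref{thm:Salem}, since $\overline{\alpha}=1-s\in[0,1]$ in dimension $d=1$. The entire technical work is to secure (iii) without disrupting (M1)--(M4).

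The strategy is to interpose a deterministic \emph{arithmetic scaffold} between the tree of $M$-adic cubes and the random selection. For each $n$, identify every cube $I\in\mathcal{Q}_n$ with its base-$M$ address in $\{0,\ldots,M^n-1\}$, and construct a nested family $\Lambda_n\subset\{0,\ldots,M^n-1\}$ that is (a) $3$-AP free, (b) compatible with the $M$-adic tree, i.e. each address in $\Lambda_{n+1}$ has its parent in $\Lambda_n$, and (c) of density at least $M^{-(1-s)n - o(n)}$. The density we need is of order $\exp(-cn)$, while Behrend-type constructions produce AP-free subsets of $\{0,\ldots,M^n-1\}$ of density up to $\exp(-c\sqrt{n\log M})$; this leaves ample room for $s$ bounded away from $1$, and a tree-compatible version can be assembled digit by digit.

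Once $(\Lambda_n)$ is fixed, run the random construction of example (ii) \emph{inside} $\Lambda_n$: at each level, carry out the independent cube selection described there, but discard any offspring whose address is not in $\Lambda_{n+1}$. Since the scaffold is deterministic and the coin flips remain spatially independent, the modified construction still satisfies \eqref{mu0}--\eqref{SI} with a slightly modified deterministic sequence $\tilde\beta_n$, and the subpolynomial loss in the scaffold density ensures $\log_M\tilde\beta_n/n\to 1-s$, so $\overline{\alpha}=\underline{\alpha}=1-s$ is unchanged. Theorem \ref{thm:Salem} then yields (ii), the deterministic Ahlfors-regular selection rule yields (i), and (iii) is automatic: any $3$-AP $\{x,x+r,x+2r\}\subset\supp(\mu)$ with $r\in(M^{-n-1},M^{-n}]$ would produce, after rounding to $M$-adic cells, three addresses in $\Lambda_n$ in arithmetic progression (up to $O(1)$ error, absorbed by taking $\Lambda_n$ to be AP-free with a small additive buffer), contradicting the defining property of the scaffold.

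The main obstacle is the tension between the global combinatorial condition ``no $3$-AP'' and the local, spatially independent nature of \eqref{SI}: any pruning of cubes based on \emph{a posteriori} observation of the random configuration would destroy independence and void Theorem \ref{thm:Salem}. Committing to the scaffold $(\Lambda_n)$ \emph{before} any randomness is introduced resolves this, at the cost of a quantitative arithmetic combinatorics input. The secondary difficulty is making the scaffold tree-compatible while preserving the Behrend-type density; this can be handled by a digit-based variant of Behrend's construction in base $M$, where one restricts the admissible digit sequences rather than arbitrary elements of $\{0,\ldots,M^n-1\}$. Once the scaffold is in place, the remaining verifications—weak convergence, Ahlfors regularity, Fourier decay, and the pigeonhole step converting a putative $3$-AP in the support into an AP triple of addresses at some scale—are direct applications of Lemma \ref{lem:asdim} and Theorem \ref{thm:Salem}.
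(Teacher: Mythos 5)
Your high-level plan is reasonable, but the deterministic scaffold idea breaks the axiom that makes everything else work. Condition \eqref{martingale_condition} requires $\EE(\mu_{n+1}(x)\,|\,A_n)=\mu_n(x)$ for \emph{all} $x\in[0,1)$, which after unwinding the definitions says that for every cube $Q\subset A_n$ and \emph{every} offspring $Q'\in\mathcal{S}(Q)$, one must have $\PP(Q'\subset A_{n+1}\,|\,A_n)=\beta_n/\beta_{n+1}$. If you commit to a deterministic nested scaffold $\Lambda_n$ and discard (or never allow) any offspring whose address is outside $\Lambda_{n+1}$, then for the excluded $Q'$ this probability is $0$, not $\beta_n/\beta_{n+1}$. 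So the modified sequence violates \eqref{martingale_condition}, it is \emph{not} a member of the class \eqref{mu0}--\eqref{SI}, and Theorem~\ref{thm:Salem} (as well as Lemma~\ref{lem:asdim}) simply does not apply. Consequently you have no justification for the Fourier decay in (ii) — and this is not a technicality: a deterministic, tree-like restriction of the digits is exactly the kind of structure that tends to destroy power Fourier decay (think of the middle-thirds Cantor set), so you cannot expect (ii) to hold for free. The only randomness you retain is the thinning of example (ii) \emph{inside} the scaffold, and nothing in the paper says that suffices.

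The paper's proof resolves precisely this tension, and it does so with a cleaner idea than a global nested scaffold. Instead of a single deterministic family $\Lambda_n$, one fixes a single AP-free Behrend set $E\subset\Z_M$ (with all elements even and $\#E\ge M^{1-\e}$) and, for each cube $Q$ independently, takes the offspring family to be $S_Q=E+a_Q\bmod M$ for an i.i.d.\ uniform shift $a_Q$. A uniform random translate $E+a_Q$ is still $3$-AP free in $\Z_M$, so the arithmetic obstruction is preserved; but now every child $Q'$ is chosen with the \emph{same} probability $\#E/M$, which restores \eqref{martingale_condition}, keeps \eqref{SI}, and lets Theorem~\ref{thm:Salem} do the work for (ii). This is the idea you are missing: the scaffold must itself be random (a random shift per cube), not committed to in advance, precisely in order to be compatible with the martingale condition.

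A secondary gap concerns the rounding step for (iii). Rounding a genuine $3$-AP $\{y_1,y_2,y_3\}$ to $M$-adic addresses $x_i$ at scale $n$ does \emph{not} produce a $3$-AP among $(x_1,x_2,x_3)$: the discrepancy $x_2-\tfrac12(x_1+x_3)$ can be $\pm\tfrac12 M^{-n}$ or $\pm M^{-n}$. Your proposal to ``absorb the $O(1)$ error by taking $\Lambda_n$ AP-free with a small additive buffer'' is left unspecified and is the crux of the matter, not a detail. The paper handles it with a concrete device: insisting that all elements of $E$ be even forces the rounded addresses (under the per-cube shift) to have the same parity, so the nonzero discrepancy must be at least $1/M$ in absolute value; equality then forces the $y_i$ to be $M$-adic endpoints, which almost surely do not lie in $A$. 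You would need some replacement for this parity trick, and without it, (iii) is not proved even granting the scaffold.

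To salvage your approach you would essentially be led back to the paper's: replace the fixed $\Lambda_n$ by a per-cube random translate of a single Behrend set with the parity condition. Once you make that replacement, the nested-scaffold machinery and the ``digit-based variant of Behrend in base $M$'' become unnecessary; a single base-level set $E$ plus the self-similarity of the construction suffice.
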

In fact, an even stronger statement is obtained: one can additionally choose $\mu$ to either be Ahlfors-regular, or to satisfy the first condition for \emph{all} $s<1$ (with the constant $C$ depending on $s$).

\begin{proof}[Proof of Theorem \ref{thm:Salem-no-progressions}]
We will construct measures supported on sets of dimension arbitrarily close to $1$, and leave to the reader the small modifications needed to construct measures of arbitrary dimension, or see \cite{Shmerkin16}.

Let $\Z_M$ be the classes of residues modulo $M$. A classical example due to Behrend \cite{Behrend46} shows the existence of a set $E\subset \{0,1,\ldots,M-1\}$ with no $3$-term arithmetic progressions and size at least $\exp(-c\sqrt{\log M})M$. An easy argument (see \cite{Shmerkin16} for details) shows that one can do the same with $E\subset\Z_M$, where moreover $M$ is even and all the elements of $E$ are even as well. In particular, given $\e>0$, we can fix a large enough even number $M$ and a set $E\subset\Z_M$ with no progressions, all elements even, and $\#E\ge M^{1-\e}$.

Now given $Q\in\mathcal{Q}_n$ and the base $M$, we can label the set of offspring intervals $\mathcal{S}(Q)$ by $\{0,1,\ldots,M-1\}$, and in turn identify this with $\Z_M$. Let $\{ a_Q:Q\in\mathcal{Q}_n,n\in\N\}$ be i.i.d. random variables chosen uniformly in $\{0,1,\ldots,M-1\}$, and set $S_Q=E+a_Q \bmod M$. We can then carry out the construction described in Section \ref{sec:themodel}, to obtain a sequence $(\mu_n)$ satisfying \eqref{mu0}--\eqref{SI} with $\beta_n = \#E^n$. Note that $E+a_Q$ does not contain progressions as a subset of $\Z_M$, and hence also a subset of $\{0,1,\ldots,M-1\}$.

Let $\mu$ be the limit measure. Since $\#S_Q=\#E$ is deterministic and constant, it is easy to see that (i) holds with $s=\log_M\#E\in (1-\e,1)$. The Fourier decay (ii) is direct from Theorem \ref{thm:Salem}.

Let us see, then, that $A=\supp(\mu)$ does not contain any $3$-term progressions. Suppose to the contrary that $\{ y_1<y_2<y_3\}\subset A$ is such a progression. Let $Q$ be a minimal $M$-adic interval with $\{ y_1,y_2,y_3\}\subset Q$. By the self-similarity of the construction, we may assume that $Q=[0,1)$. Write $y_i=x_i+\delta_i$, where $x_i$ is the left-endpoint of the $M$-adic interval containing $y_i$, and $\delta_i\in [0,1/M)$. From $y_2=\tfrac12(y_1+y_3)$, we get
\[
\frac12{(\delta_1+\delta_3)} -\delta_2 = x_2 - \frac12(x_1+x_3)
\]
The left hand-side is at most $1/M$. On the other hand, by construction the right-hand size is not zero (since $x_1,x_2,x_3$ are not all equal and do not form an arithmetic progression). Also, the numbers $M x_i$ are either all even or all odd, so the right-hand side is at least $1/M$. Hence the only option is that both the left and right-hand sides are equal to $1/M$, but this forces all of the $y_i$ to be endpoints of $M$-adic intervals, which almost surely does not happen (since a fixed point has probability zero of belonging to $A$, and there are countably many such endpoints). This contradiction finishes the proof.
\end{proof}

We finish the article by commenting on the opposite problem of finding sets with many patterns. As mentioned before, it follows from \cite{DMT59} that there are compact subsets of the real line of zero Hausdorff dimension which contain an homothetic copy of all finite sets. However, such a set necessarily has packing and box-counting dimension $1$:
\begin{lemma}  \label{lem:packingdim-patterns}
If $A\subset\R$ contains a homothetic image of all $m$-element sets, then $\dim_H(A^m)\ge m-2$, and $\dim_P(A)\ge 1-2/m$, where $\dim_P$ denotes packing dimension.
\end{lemma}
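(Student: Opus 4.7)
The plan is to exploit the homothety invariance of the hypothesis. An ordered $m$-tuple $x_1<\cdots<x_m$ is determined, modulo orientation-preserving homotheties, by its \emph{shape}
\[
\pi(x_1,\ldots,x_m) \;=\; \left(\frac{x_2-x_1}{x_m-x_1},\ldots,\frac{x_{m-1}-x_1}{x_m-x_1}\right)\in\Delta,
\]
where $\Delta:=\{(s_2,\ldots,s_{m-1})\in\R^{m-2} : 0<s_2<\cdots<s_{m-1}<1\}$ is an open simplex of Hausdorff dimension $m-2$. For each $\vec s\in\Delta$, applying the hypothesis to the $m$-element set $\{0,s_2,\ldots,s_{m-1},1\}$ gives $t(\vec s)\in\R$ and $r(\vec s)>0$ such that $\phi(\vec s):=(t,t+rs_2,\ldots,t+rs_{m-1},t+r)\in A^m$; by design $\pi\circ\phi=\mathrm{id}_\Delta$, so $\pi(\phi(\Delta))=\Delta$.

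For the Hausdorff dimension bound, I would stratify $\phi(\Delta)=\bigcup_{n\in\N}B_n$ with $B_n=\phi(\Delta)\cap\{x_m-x_1\ge 2^{-n}\}$. A one-line partial-derivative computation shows that $\pi|_{B_n}$ is Lipschitz with constant at most $C_m\cdot 2^n$ for a constant $C_m$ depending only on $m$. Since Lipschitz maps do not increase Hausdorff dimension and this dimension is countably stable,
\[
m-2 \;=\; \dim_H(\Delta) \;=\; \sup_n\dim_H(\pi(B_n)) \;\le\; \sup_n\dim_H(B_n) \;\le\; \dim_H(\phi(\Delta)) \;\le\; \dim_H(A^m).
\]

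The packing-dimension claim then follows by combining this with the standard product inequality $\dim_P(X\times Y)\le\dim_P(X)+\dim_P(Y)$, iterated to give $\dim_P(A^m)\le m\,\dim_P(A)$ (see \cite{Mattila95}), together with $\dim_P\ge\dim_H$:
\[
m\,\dim_P(A) \;\ge\; \dim_P(A^m) \;\ge\; \dim_H(A^m) \;\ge\; m-2.
\]

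The main (and essentially only) obstacle is that the shape projection $\pi$ is not globally Lipschitz --- its Lipschitz constant blows up as $x_m-x_1\to 0$ --- which is precisely why the dyadic stratification by diameter, together with countable stability of $\dim_H$, is needed. Measurability of the selection $\vec s\mapsto(t(\vec s),r(\vec s))$ plays no role, since the dimension inequalities invoked hold for arbitrary subsets of Euclidean space.
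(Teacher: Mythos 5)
Your argument is correct and follows essentially the same route as the paper: project the ordered $m$-tuples in $A^m$ to their homothety classes via a rational normalization map, note that the hypothesis forces the image to contain a full-dimensional piece of a simplex in $\R^{m-2}$, observe that the map is Lipschitz away from degenerate tuples so it does not increase Hausdorff dimension, and finish with the packing-dimension product inequality. The only differences are cosmetic and in your favor: you normalize by $x_m-x_1$ rather than $x_2-x_1$, you make explicit the dyadic stratification and countable stability behind the phrase ``locally Lipschitz so does not increase Hausdorff dimension'' that the paper leaves implicit, and you correctly write the product bound as $\dim_P(A^m)\le m\,\dim_P(A)$, whereas the paper's displayed $\left(\dim_P(A)\right)^m\ge\dim_P(A^m)$ is plainly a typo for that same inequality.
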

\begin{proof}
Let $X_m=\{ (x_1,\ldots,x_m)\in\R^{m}: x_1<\ldots<x_m\}$. Consider the map $\Pi:X_m\to \R^{m-2}$,
\[
(x_1,\ldots,x_m)\mapsto  \frac{1}{x_2-x_1}(x_3-x_1,\ldots,x_m-x_1).
\]
This map is locally Lipschitz so does not increase Hausdorff dimension. On the other hand, $A$ contains an homothetic image of $(0,1,t_3,\ldots,t_m)$ with $1<t_3<\ldots<t_m$ if and only if $(t_3,\ldots,t_m)\in\Pi(A^m\cap X_m)$. Hence, if $A$ contains an homothetic copy of all $m$-element sets, we must have
\[
\left(\dim_P(A)\right)^m \ge \dim_P(A^m) \ge \dim_H(A^m)\ge m-2,
\]
where the left-most inequality is a well-known property of packing dimension, see e.g. \cite[Theorem 3]{Tricot82}. The claim follows.
\end{proof}

This lemma suggests the following question: how small can the \emph{packing} dimension of a set containing an homothetic image of all $m$-element sets be? Using methods similar to those used to prove Theorem \ref{thm:self-convolutions}, but with additional technical difficulties, we are able to prove the following: let  $A\subset [0,1]$ be the fractal percolation set constructed with parameters $p$ and $M$, and write $s=1+\log_Mp$ for the almost sure dimension. If $s>1-2/m$, with $m\ge 3$, then almost surely on $A\neq \varnothing$, the set $A$ contains an homothetic copy of all $m$-element sets. This result will appear in \cite{ShmerkinSuomala16}. To understand the analogy with Theorem \ref{thm:self-convolutions} for three-fold convolutions, recall that in the proof it was key to understand the intersections of $A_n\times A_n\times A_n$ with $(\Pi')^{-1}(u)$, where $\Pi':\R^3\to\R, (x,y,z)\to x+y+z$. In this case, one needs to do a similar study for the fibers $\Pi^{-1}(t_3,\ldots,t_m)$, where $\Pi$ is as in the proof of Lemma \ref{lem:packingdim-patterns}. Additional complications are caused by the non-linearity of $\Pi$. Also note that Theorem \ref{thm:Salem-no-progressions} shows that one cannot hope to have a similar result of the same generality as Theorem \ref{thm:self-convolutions}.

\end{document}